\documentclass{amsart}
\raggedbottom
\usepackage{amsmath}
\usepackage[usenames,dvipsnames]{color}
\numberwithin{equation}{section}
\usepackage{amssymb}
\usepackage{comment}
\usepackage[all]{xy}
\usepackage{mathtools}
\definecolor{Vino}{rgb}{0.356,0,0}
\definecolor{Ruta}{rgb}{0.309, 0.459, 0.208}
\usepackage[colorlinks,linkcolor=Vino,citecolor=Ruta]{hyperref}
%

%
%
\let\cal\mathcal
\def\Ascr{{\cal A}}
\def\Bscr{{\cal B}}
\def\Cscr{{\cal C}}
\def\Dscr{{\cal D}}
\def\Escr{{\cal E}}
\def\Fscr{{\cal F}}
\def\Gscr{{\cal G}}
\def\Hscr{{\cal H}}

\def\Kscr{{\cal K}}
\def\Lscr{{\cal L}}
\def\Mscr{{\cal M}}

\def\Oscr{{\cal O}}
\def\Pscr{{\cal P}}

\def\Sscr{{\cal S}}
\def\Tscr{{\cal T}}

\def\Wscr{{\cal W}}
\def\Xscr{{\cal X}}

%
%
\let\blb\mathbb
\def\CC{{\blb C}}

\def \PP{{\blb P}}

\def \ZZ{{\blb Z}}

\def \RR{{\blb R}}

\def\SS{{\blb S}}

\def\id{\text{id}}
\def\Id{\operatorname{id}}

\def\quot{/\!\!/}

\def\coh{\mathop{\text{\upshape{coh}}}}

\def\Spec{\operatorname {Spec}}

\def\Hom{\operatorname {Hom}}

\def\End{\operatorname {End}}
\def\RHom{\operatorname {RHom}}
\def\uRHom{\operatorname {R\mathcal{H}\mathit{om}}}

\def\End{\operatorname {End}}

\def\id{{\operatorname {id}}}

\def\r{\rightarrow}

\DeclareMathOperator{\Ind}{Ind}

\def\la{\langle}
\def\ra{\rangle}

\DeclareMathOperator\RInd{RInd}
\def\deltaC{\xi}

%
%

\newtheorem{lemma}{Lemma}[section]
\newtheorem{proposition}[lemma]{Proposition}
\newtheorem{theorem}[lemma]{Theorem}
\newtheorem{corollary}[lemma]{Corollary}

\newtheorem{observation}[lemma]{Observation}

\theoremstyle{definition}

\newtheorem{example}[lemma]{Example}
\newtheorem{definition}[lemma]{Definition}

{

}

\theoremstyle{remark}

\newtheorem{remark}[lemma]{Remark}

\newdimen\uboxsep \uboxsep=1ex
\def\uboxn#1{\vtop to 0pt{\hrule height 0pt depth 0pt\vskip\uboxsep
\hbox to 0pt{\hss #1\hss}\vss}}

\def\uboxs#1{\vbox to 0pt{\vss\hbox to 0pt{\hss #1\hss}
\vskip\uboxsep\hrule height 0pt depth 0pt}}

\marginparwidth=0.26\textwidth
\let\oldmarginpar\marginpar
\long\def\marginpar#1{\oldmarginpar{\raggedright \tiny \sf\baselineskip 0pt \lineskip 0pt #1}}

\def\vect{\operatorname{vec}}
\def\Sym{\operatorname{Sym}}

\def\cone{\operatorname{cone}}
\def\Perf{\operatorname{Perf}}
\def\Ob{\operatorname{Ob}}
\edef\savedlineskip{\lineskip \the\lineskip}
\title{A class of perverse schobers in Geometric Invariant Theory} 
\author[\v{S}pela \v{S}penko and Michel Van den Bergh]{\v{S}pela \v{S}penko and Michel
  Van den Bergh} 
\thanks{The first author is a FWO $[$PEGASUS$]^2$
  Marie Sk\l odowska-Curie fellow at the Free University of Brussels
  (funded by the European Union Horizon 2020 research and innovation
  program under the Marie Sk\l odowska-Curie grant agreement No
  665501 with the Research Foundation Flanders (FWO))}  
\address{Departement Wiskunde, Vrije Universiteit
  Brussel, Pleinlaan $2$, B-1050 Elsene}
\email[]{spela.spenko@vub.be}
\address{Departement WNI, Universiteit Hasselt, Universitaire Campus \\
  B-3590 Diepenbeek} 
\email[]{michel.vandenbergh@uhasselt.be}
\thanks{The second author is a senior researcher at the Research
  Foundation Flanders (FWO).  While working on this project he was
  supported by the FWO grant G0D8616N: ``Hochschild cohomology and
  deformation theory of triangulated categories''.}
\keywords{Perverse sheaves, categorification, geometric invariant theory}
\subjclass[2010]{13A50, 53D37, 32S45, 16S38, 18E30, 14F05}
\begin{document}
\begin{abstract}
  Perverse schobers are categorifications of perverse sheaves.  We
  construct a perverse schober on a partial compactification of the
  stringy K\"ahler moduli space (SKMS) associated by Halpern-Leistner
  and Sam to a quasi-symmetric representation $X$ of a reductive
  group $G$, extending the local system of triangulated categories exhibited by them. 
The triangulated categories appearing in our
  perverse schober are subcategories of the derived category of
  the quotient stack $X/G$.
\end{abstract}
\maketitle
\section{Introduction}   Perverse sheaves appear as (derived)
solution spaces to systems of linear differential equations with nice (``regular'')
singularities and perverse schobers are categorifications of perverse
sheaves. The simplest perverse sheaves on a connected complex manifold~$M$ are
the local systems (suitably shifted) and these correspond to representations of
$\pi_1(M,x)$ where $x$ is a base point. The corresponding categorified
notion is a triangulated category $\Ascr$ with an $\pi_1(M,x)$-action
by autoequivalences.  We call this \emph{a local system of
  triangulated categories} on $M$ and we refer to $\Ascr$ as the
\emph{fiber} in $x$ of the local system.

\medskip

In general a perverse sheaf on $M$ is only a local system on some dense open $U\subset M$. Likewise a perverse schober is a suitable
``extension to $M$''  of a local system of triangulated categories on some $U$.
In an ideal world a  perverse schober on $M$ would just be a perverse sheaf of triangulated categories on $M$. Unfortunately
it is not known how to make
such an approach work in general, the main culprit being the absence of a sensible notion of complexes of triangulated categories.

\medskip

On the other hand, there are many cases where the category of perverse sheaves admits a combinatorial description
and one may try to categorify the latter. This was the approach taken by Kapranov and Schechtman
in the foundational paper~\cite{KapranovSchechtmanSchobers} where amongst others they made the
beautiful observation that perverse sheaves on a punctured disk are naturally categorified by
spherical functors. 

\medskip

A much more general instance where a combinatorial description of the category of perverse sheaves is known  \cite{KapranovSchechtman} 
is that of a complex affine space stratified by a complexified real hyperplane arrangement  $\Hscr$
and this leads to a corresponding definition of perverse schobers \cite{BondalKapranovSchechtman}. See \S\ref{sec:schobers} for more details.
Since there is some flexibility in  what one wants from a perverse schober (see \S\ref{sec:spherical}) we use this notion somewhat loosely and we refer to the specific notion introduced in \cite{BondalKapranovSchechtman}
as an \emph{$\Hscr$-schober} (this is the terminology in loc.\ cit.).

\medskip

Hyperplane arrangements are common in representation theory and geometric invariant theory and they are often accompanied by wall-crossing formulas (``reflections''). If these
can be categorified in some way to a local system of categories on the open stratum then one may hope to enhance the categorification further to a perverse schober on the whole space.
See \cite{BondalKapranovSchechtman} for some far reaching conjectures in this direction related to \cite{Bez}.
In the context of geometric invariant theory perverse schobers on a punctured disc were associated to a ``balanced wall-crossing'' in VGIT by Donovan \cite{Donovan2,Donovan}, building on earlier
work by Halpern-Leistner and Schipman \cite{halpern2016autoequivalences}. 

\medskip

In fact the main example considered in \cite{Donovan} is a very
special case of hyperplane arrangements constructed by
Halpern-Leistner and Sam in \cite{HLSam} (as noted e.g.\ in
\cite[Remark before \S1.2]{Donovan}). The main purpose of this note
will be to construct perverse schobers on the latter in complete
generality.

\medskip

The input in the work in \cite{HLSam} is a connected reductive group $G$ with
maximal torus $T$ and Weyl group $\Wscr$ and a 
$G$-representation $W$ which is \emph{quasi-symmetric}. By the latter
we mean that if $(\beta_i)_i\in X(T)$ are the weights of $W$ then
for every line $\ell\subset X(T)_\RR$ through the origin we have
$\sum_{\beta_i\in\ell} \beta_i=0$. This is in particular the case if $W\cong W^\ast$,
i.e.\ if $W$ is equipped with a non-degenerate $G$-invariant bilinear form.
Below we also assume that the generic $T$-stabilizer is finite (i.e.\ $(\beta_i)_i$ spans $X(T)_\RR$). 

\medskip

Quasi-symmetric representations were introduced in \cite{SVdB}. It was
shown in loc.\ cit.\ that for such representations the GIT quotient
$X\quot G$ often admits a \emph{non-commutative crepant resolution}
\cite{VdB04} where $X=\Sym(W)=W^\ast$.  If $G$ is not semi-simple and hence $X(G)=X(T)^{\Wscr}\neq 0$ 
then we may also consider non-trivial linearizations of the $G$-action associated to
$0\neq \chi\in X(G)$. 
If $G$ is a torus and $\chi$ is chosen generically  then the
semi-stable locus $X^{ss,\chi}$ of~$X$ for such a linearization
yields a smooth Deligne-Mumford stack $X^{ss,\chi}/G$ which resolves $X\quot G$. 
Moreover this is often true for more general groups as well (see \cite[Proposition 2.1]{HLSam} for the precise conditions)
so let us consider this case. In~\cite{HLSam} Halpern-Leistner and Sam construct an 
$X(T)^\Wscr$-invariant hyperplane arrangement $\Hscr$ in $X(T)^\Wscr_\RR$ 
together with a  local system on $(X(T)^{\Wscr}_{\CC}-\Hscr_\CC)/X(T)^\Wscr$ with fiber
$D(X^{ss,\chi}/G)$. Mirror symmetry considerations let one think of
the quotient $(X(T)^\Wscr_{\CC}-\Hscr_{\CC})/X(T)^\Wscr$ as the so-called
\emph{stringy K\"ahler moduli space} (SKMS) of $X^{ss,\chi}/G$ when $X$ is a quasi-symmetric representation (in \cite{Kite} this is somewhat rigorously established when $G$ is a torus).

The actual construction of $\Hscr$ is a bit involved and we refer the reader to \S\ref{sec:SKMSqs}. Let us restrict
ourselves to giving the following simple example.
\begin{example}\cite{Donovan}
\label{ex:donovan}
Let $G=G_m$ be a 1-dimensional torus acting on a representation with weights $(a_i)_i\in \ZZ$ such that $\sum_i a_i= 0$
and such that not all $a_i$ are zero. Then the SKMS is 
$(X(G_m)_{\CC}-X(G_m))/X(G_m)=(\CC-\ZZ)/\ZZ {\cong} \PP^1_{\CC}-\{0,1,\infty\}$ (via $z\mapsto e^{2\pi i z}$).
In the case the weights are $-1,\ldots,-1,1,\ldots,1$ this example is considered in \cite{Donovan}. The resulting local
system of triangulated categories is related to the simplest case of a flop (see also Example \ref{ex:flober}).
\end{example}

In this paper we prove the following. 
\begin{theorem} \label{th:mainth}
The local system on the SKMS given in \cite{HLSam} extends to 
a perverse schober on the partial compactification of the SKMS given by $X(T)^{\Wscr}_{\CC}/X(T)^\Wscr$.
\end{theorem}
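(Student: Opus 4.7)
\medskip
\noindent\textbf{Proof proposal.}
The plan is to work $X(T)^\Wscr$-equivariantly on the universal cover $V_\CC:=X(T)^\Wscr_\CC$ and to construct an $\Hscr_\CC$-schober there; descent along the translation action of $X(T)^\Wscr$ then yields the desired schober on $V_\CC/X(T)^\Wscr$. Since $\Hscr$ is a complexified real arrangement in $V:=X(T)^\Wscr_\RR$, the Kapranov-Schechtman combinatorial description of perverse sheaves (to be recalled in \S\ref{sec:schobers}) reduces the construction of an $\Hscr_\CC$-schober to three tasks: (a) attaching a triangulated category $\Ascr_F$ to each face $F$ of $\Hscr$, (b) specifying transition functors $\gamma_{FF'}\colon \Ascr_F\to\Ascr_{F'}$ together with adjoints $\delta_{FF'}$ for every face inclusion $F\leq F'$, and (c) verifying the compatibility axioms from \cite{BondalKapranovSchechtman}.

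For (a) and (b) I would use the magic windows of \cite{SVdB,HLSam}. To each $\delta\in V$ one associates a subcategory $\Ascr_\delta\subset D(X/G)$ cut out by a weight condition; quasi-symmetry of $W$ guarantees that $\Ascr_\delta$ depends only on the face of $\Hscr$ containing $\delta$ in its relative interior, so we set $\Ascr_F:=\Ascr_\delta$ for any $\delta\in\relint F$. When $F$ is a chamber containing a generic $\chi$, restriction yields $\Ascr_F\simeq D(X^{ss,\chi}/G)$, matching the fiber of the local system of \cite{HLSam}. For a face inclusion $F\leq F'$ the two windows are comparable, and I would take $\gamma_{FF'}$ to be the inclusion and $\delta_{FF'}$ its adjoint, the latter produced by mutation along the Kempf-Ness stratification associated to a one-parameter subgroup in the lineality of $F$. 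When $F$ and $F'$ are adjacent chambers the resulting composite reproduces the wall-crossing equivalence of \cite{halpern2016autoequivalences,HLSam}, so the construction agrees with the existing local system on $V_\CC-\Hscr_\CC$.

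The main obstacle is (c). Transitivity of $\gamma$ and $\delta$ under successive face inclusions should follow from explicit semiorthogonal decompositions of the relevant windows induced by the Kempf-Ness stratifications. The substantive axiom is the Kapranov-Schechtman invertibility condition at codimension-two flats, which asserts that the composition of wall-crossing autoequivalences around each codimension-two stratum of $\Hscr$ equals the monodromy predicted by the local system. I expect to reduce this to a rank-two problem by taking a transverse slice: quasi-symmetry is inherited by restriction to subtori, so the slice is controlled by a quasi-symmetric representation of a rank-two Levi and admits a short combinatorial classification, in which each case reduces either to the torus situation of \cite{Donovan,Donovan2} or to a direct two-variable computation using the NCCR structure of \cite{SVdB}. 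Once (c) is established, $X(T)^\Wscr$-equivariance of the whole package is manifest from the construction, and descent completes the proof.
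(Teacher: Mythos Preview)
Your construction in (a) and (b) matches the paper: the category attached to a face $C$ is the window $\Escr_C=\langle P_\chi : \chi\in\Lscr_C\rangle\subset D(X/G)$, and the structure maps are inclusions together with their right adjoints (though note the direction --- in the paper $\delta_{CC'}:\Escr_C\hookrightarrow\Escr_{C'}$ is the inclusion for $C'\le C$, since smaller faces carry larger windows, and $\gamma_{C'C}$ is its right adjoint, produced simply as $\RHom_{X/G}(P_C,-)\otimes_{\End(P_C)}P_C$ rather than via a Kempf--Ness stratification).

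The gap is in (c). First, you have misidentified the axioms: the $\Hscr$-schober conditions of Definition~\ref{def:schober} are (M), (I), (T), and there is no separate codimension-two invertibility or monodromy condition --- (I) concerns codimension-one wall crossings and (T) is transitivity for collinear triples of faces of arbitrary dimension. Second, your proposed reduction to a rank-two Levi via transverse slices does not make sense here: the arrangement $\Hscr$ lives in $X(T)^\Wscr_\RR$, not in $X(T)_\RR$, and its strata are cut out by translates of the supporting hyperplanes of the polytope $\Delta$; a codimension-two intersection in $X(T)^\Wscr_\RR$ has no relation to Levi subgroups of $G$ and does not hand you a quasi-symmetric representation of anything. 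The paper instead proceeds directly and uniformly. For $C<C'$ it exhibits an explicit semi-orthogonal decomposition $\Escr_C=\langle\Escr_{C,C'},\Escr_{C'}\rangle$ (Corollary~\ref{cor:maincor}), with $\Escr_{C,C'}$ generated by objects $\RInd^G_B(\chi\otimes\Oscr_{X^{\lambda,+}})$ for $\chi\in\Lscr_C\setminus\Lscr_{C'}$ and suitable $\lambda$. A duality computation (Lemmas~\ref{lem:duality}, \ref{lem:ddual}), exploiting the $(-w_0)$-symmetry of $\Delta$, then turns this into the four-periodic decomposition of Proposition~\ref{prop:mutation}, which gives (I) and indeed the stronger statement that $(\Escr_{C_1},\Escr_{C_2})$ is a mutation spherical pair in $\Escr_C$. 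Axiom (T) is checked by walking along the segment $[\xi_{C_1},\xi_{C_3}]$ to reduce to neighbouring faces and then proving the orthogonality $\Escr_{C_1,C_2}\subset\Escr_{C_3}^\perp$ via the weight estimate of Lemma~\ref{lem:workhorse}. No case analysis, no slicing, and no rank reduction is used.
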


See Proposition \ref{prop:schober} for a much more precise statement. In the context of Example~\ref{ex:donovan}, Theorem \ref{th:mainth} means that we construct a perverse schober
of $\PP^1_{\CC}-\{0,\infty\}$ extending the local system on $\PP^1_{\CC}-\{0,1,\infty\}$. This is the same as what is achieved in \cite{Donovan} when the weights are $-1,\ldots,-1,1,\ldots,1$.

\medskip

Of course the reader will note that we have been rather sloppy in the formulation of Theorem \ref{th:mainth} since the SKMS is not actually the complement of a hyperplane arrangement in an affine 
space but rather a quotient of such. So what we actually do below is  construct an $\Hscr$-schober  on $X(T)^\Wscr_{\CC}$ which is $X(T)^\Wscr$-equivariant in an appropriate sense.
See Remark \ref{rem:stack}.
\section{Notation and conventions}
\label{sec:notation}
Let $k$ be an algebraically closed field of characteristic zero.
Throughout everything is linear over $k$.

Let $G$ be a {connected} 
reductive group. 
 Let $T\subset B\subset G$ be respectively a maximal torus and a Borel subgroup. We assume that the roots of $B$ are the negative roots.  
Half the sum of positive roots of $G$ is denoted by $\overline{\rho}$. 
Let $\Wscr=N(T)/T$ be the Weyl group of $T$. We denote the dominant cones in $X(T)$ and $Y(T)$
by $X(T)^+$ and $Y(T)^{+}$, respectively. The corresponding notations $X(T)^-$, $Y(T)^-$ denote the anti-dominant cones.
If $\chi$ in $X(T)_\RR$ and there exists $w\in \Wscr$ such that $w{\ast}\chi:=w(\chi+\overline{\rho})-\overline{\rho}\in X(T)^+_\RR$ then we write $\chi^+=w{\ast} \chi$.
Otherwise $\chi^+$ is undefined.
If $\chi\in X(T)^+$ then we denote $V(\chi):=\Ind_B^G\chi$. Note that $V(\chi)$ is the irreducible representation of $G$ with highest weight $\chi$. 
By $w_0$ we denote the longest element in $\Wscr$. In particular
$V(\chi)^\ast=V(-w_0\chi)$. If $\alpha$ is a simple root then the corresponding reflection is written as $s_\alpha$.
On $Y(T)_\RR$ we sometimes choose a positive definite $\Wscr$-invariant quadratic form and denote the corresponding norm by $\|\;\|$. 

Below $W$ will be a finite dimensional $G$-representation and we denote the $T$-weights of $W$ by $(\beta_i)_{i=1}^d\in X(T)$. We write $X=\Spec \Sym(W)\cong W^\ast$. We say that $W$ is \emph{unimodular} if  $\wedge^dW\cong k$. 
Most of our results will be for the case that $W$ is \emph{quasi-symmetric} (see \cite[\S1.6]{SVdB}), i.e. for all lines $\ell$ such that $0\in\ell \subset X(T)_\RR$ we have $\sum_{\beta_i\in \ell}\beta_i=0$. 
We impose this condition from \S\ref{sec:SKMSqs} on. 
For $\lambda\in Y(T)_\RR$ let $X^{\lambda,+}:=\Spec \Sym(W/K_\lambda)$  where $K_\lambda$ is the linear subspace of $W$ spanned by the weight vectors $w_j$ with $\la\lambda,\beta_j\ra >0$.

\medskip

Now we list some more general conventions. All modules are left modules. All stacks are algebraic (and in fact quotient stacks).
If $\Lambda$ is a ring then $D(\Lambda)$ is the unbounded derived category. If $\Xscr$ is a stack we denote by $D(\Xscr)$ the unbounded derived category of complexes
of $\Oscr_{\Xscr}$-modules with quasi-coherent cohomology. The notation $\Lambda^\circ$ denotes the opposite ring of $\Lambda$,  if $\Ascr$ is a category then $\Ascr^\circ$ is the opposite category.

If $j:Y\r X$ is a closed embedding then we confuse $\Oscr_Y$ with $j_\ast\Oscr_Y$. If in addition~$X$ is an affine $G$-variety and $Y$ is a $B$-variety then
we (severely) abuse notation by writing  $\RInd^G_B$
for the derived pushforward for $Y/B\xrightarrow{\cong} G\times^B Y/G\r X/G$.

In any kind of cell complex, cells (or faces,  cones,\dots ) are assumed to be relatively open. We refer to their closures as ``closed cells''. The ordering on cells is by $C\le C'$
iff $C\subset \overline{C'}$. \emph{Facets} are cells of codimension one. In the case of a polyhedron we regard the full polyhedron as a face.

If $\Ascr$ is a triangulated category closed under coproduct then we write $\Ascr^c$
for the category of compact objects in $\Ascr$. If
$\Sscr\subset \Ob(\Ascr)$ then the full subcategory of $\Ascr$
\emph{generated} by $\Sscr$ (notation $\langle \Sscr\rangle$) is the smallest
strict, full triangulated subcategory of~$\Ascr$ closed under
coproduct which contains $\Sscr$. We often implicitly use the fact that if $\Sscr$ consists of compact objects then $\langle \Sscr\rangle^c$ is \emph{classically generated} by $\Sscr$ \cite{Neeman3}; i.e.\ $\langle\Sscr\rangle^c$ is the smallest (strict) thick subcategory of $\Ascr$ which contains $\Sscr$. In particular
$\langle \Sscr\rangle^c=\langle \Sscr\rangle \cap \Ascr^c$. In general one must be quite careful with compact objects on stacks
\cite{HallNeemanRydh,HallRydh} but in the benign situation we consider there are no surprises. If $G$ is a reductive group
acting on an affine variety $X$ then $D(X/G)$ is compactly generated and the compact objects are the $G$-equivariant
perfect complexes (see e.g.\ \cite[Theorem 3.5.1]{SVdB3}).

If $\Bscr$ is a full subcategory of
$\Ascr$ then $\Bscr^\perp$ is the full subcategory of~$\Ascr$ spanned by the objects
$\{A\in\Ob(\Ascr)\mid \Hom(B,A)=0 \text{ for all } B\in
\Bscr\}$.

\section{Perverse schobers on affine hyperplane arrangements}
\label{sec:schobers}
\subsection{Introduction}
In this section we define perverse schobers on linear spaces stratified by affine hyperplane
arrangements.  This is achieved by 
 categorifying the Kapranov and Schechtman \cite{KapranovSchechtman} description of
perverse sheaves on hyperplane arrangements.  In the linear
case such a categorification was carried out in \cite{BondalKapranovSchechtman} and the affine case 
is just a straightforward generalization.
Since there is some ambiguity what the precise conditions on a perverse schober should be in this situation (see e.g.\ \S\ref{sec:spherical} below) 
the perverse schobers from \cite{BondalKapranovSchechtman} are more specifically referred to as \emph{$\Hscr$-schobers}.
\subsection{Definitions}
We first recall the description of Kapranov and Schechtman of perverse sheaves.
Let $\Hscr$ be an affine hyperplane arrangement in a finite dimensional real vector space~$V$.
The closures of the connected components of $\RR^n\setminus \Hscr$ are convex polytopes. Let $(\Cscr,\leq)$ denote the set of these polytopes partially ordered by 
  $C_1\le C_2$ iff $C_1\subset \overline{C_2}$. If $C_1$ and $C_2$ share a common face then there is a maximal one and we denote it by $C_1\wedge C_2$.
A triple of faces $(C_1,C_2,C_3)$ is {\em collinear} if there exists $C'\leq C_1,C_2,C_3$ and there exist points $c_i\in C_i$ such that $c_2\in[c_1,c_3]$.

\begin{theorem}\cite[Theorem 9.10]{KapranovSchechtman}
  The category 
   of perverse sheaves  on $V_\CC$ with respect to the stratification induced by $\Hscr_\CC$ is equivalent to the category
  of diagrams consisting of finite dimensional vector spaces $E_C$,
  $C\in\Cscr$, and linear maps $\gamma_{C'C}:E_{C'}\to E_C$,
  $\delta_{CC'}:E_C\to E_{C'}$ for $C'\leq C$ such that
  $(E_C,(\gamma_{C'C})_{C,C'})$ is a representation of $(\Cscr,\leq)$
  in $\vect(k)$ and $(E_C,(\delta_{CC'})_{CC'})$ a
  representation of $(\Cscr,\geq)$ in $\vect(k)$ and the following
  conditions are satisfied:
\begin{enumerate}
\item[(m)] $\gamma_{C'C}\delta_{CC'}=\id_{E_C}$ for $C'\leq C$. 
In particular, $\phi_{C_1C_2}:=\gamma_{C'C_2}\delta_{C_1C'}$ for $C'\leq C_1,C_2$ is well defined. 
\item[(i)] $\phi_{C_1C_2}$ is an isomorphism for every $C_1\neq C_2$ which
  are of the same dimension $d$ lying, lie in the same $d$-dimensional
  affine space and share a facet. 
\item[(t)] $\phi_{C_1C_3}=\phi_{C_2C_3}\phi_{C_1C_2}$ for collinear
  triples of faces $(C_1,C_2,C_3)$.
\end{enumerate}
\end{theorem}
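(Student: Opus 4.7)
The plan is to produce an explicit functor from $\Perv(V_\CC, \Hscr_\CC)$ to the combinatorial category of diagrams $(E_C, \gamma_{C'C}, \delta_{CC'})$, and then establish that it is an equivalence by a local-to-global analysis. The functor is constructed via iterated vanishing cycles: for each face $C$ pick an interior point $c_C$ and a real flag $F^C_0 \subset F^C_1 \subset \cdots \subset F^C_n = V$ through $c_C$ that is generic with respect to $\Hscr_\CC$ in a neighborhood of $c_C$. Let $\Phi_C$ denote the iterated complex vanishing-cycle functor along the complexification of this flag. By exactness of nearby/vanishing cycles on the perverse $t$-structure, $\Phi_C(\Fscr)$ is concentrated in degree $0$, and I set $E_C := \Phi_C(\Fscr)$.

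For $C'\leq C$, one can arrange the flag at $C'$ to degenerate to a flag adapted to $C$ as $c_{C'}$ moves to a boundary point; this yields natural transformations between the corresponding iterated vanishing-cycle functors, from which $\gamma_{C'C}$ and $\delta_{CC'}$ arise as the standard canonical and variation maps. Relation (m) is then the familiar identity $\mathrm{can}\circ\mathrm{var}=\id$ for vanishing cycles of perverse sheaves. Relation (i), the invertibility of $\phi_{C_1 C_2}$ when $C_1, C_2$ share a facet inside a common top-dimensional affine space, reduces via a transverse slice to the one-dimensional statement that the monodromy around a puncture in a disc is invertible on the nearby cycles of a perverse sheaf. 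Relation (t) for a collinear triple $(C_1,C_2,C_3)$ amounts to the transitivity of iterated vanishing cycles along a single flag that can be simultaneously adapted to all three faces, and collinearity is precisely what allows such a flag to be chosen.

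For fullness, faithfulness, and essential surjectivity I would exploit a local model: in a neighborhood of a face $C$ of codimension $k$, the arrangement looks like a product of $k$ complex lines each punctured at the origin, and perverse sheaves there admit a standard local quiver description with $E_C$ at the central vertex and with $\gamma_{C'C}, \delta_{CC'}$ for adjacent faces $C'\geq C$ recovering the arrows. The conditions (m), (i), (t) are exactly what is needed to glue these local descriptions over the face poset $\Cscr$ into a single object of $\Perv(V_\CC, \Hscr_\CC)$, by a \v{C}ech-type argument along the stratification. The main obstacle will be essential surjectivity, and more specifically verifying that the object assembled from combinatorial data really lies in the heart of the perverse $t$-structure with the correct stalks and costalks along every stratum: here (t) must be strong enough to encode all the higher compatibilities between iterated vanishing cycles along distinct flags through a given face, while (i) provides the invertibility without which the clutching data would assemble only into a complex of sheaves rather than a genuine perverse sheaf.
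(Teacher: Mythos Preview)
The paper does not prove this theorem; it is quoted verbatim as \cite[Theorem 9.10]{KapranovSchechtman} and used as background for the definition of an $\Hscr$-schober. So there is no proof in the paper to compare against, and your proposal should be judged on its own merits and against what Kapranov and Schechtman actually do.

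On that score there are two concrete problems. First, your derivation of (m) from ``the familiar identity $\mathrm{can}\circ\mathrm{var}=\id$'' is wrong: for the classical nearby/vanishing cycle functors one has $\mathrm{var}\circ\mathrm{can}=T-\id$ on $\psi$ and $\mathrm{can}\circ\mathrm{var}=T-\id$ on $\phi$, never the identity. In the Kapranov--Schechtman description the spaces $E_C$ are \emph{hyperbolic stalks} (cohomology with support in a product cell at a point of $C$), and the maps $\gamma_{C'C}$, $\delta_{CC'}$ are generalization and specialization maps for this hyperbolic localization, not the classical $\mathrm{can}$ and $\mathrm{var}$. Condition (m) then comes out of the contractibility of the relevant cells, not from any monodromy identity. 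Second, your local model is wrong in general: near a face $C$ the arrangement looks like the \emph{central} arrangement of all hyperplanes containing $C$, intersected with a transverse slice. This is a product of punctured lines only when those hyperplanes are in general position, which is not assumed. The actual argument in \cite{KapranovSchechtman} proceeds by induction on the dimension of $V$ and a careful analysis of this central transverse arrangement, not by a normal-crossing local model.

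Your overall strategy---extract linear-algebra data from a perverse sheaf via a sheaf-theoretic functor, verify (m), (i), (t), then reconstruct---is the right shape, but the specific functor (iterated vanishing cycles along generic flags) and the two steps flagged above would need to be replaced by the hyperbolic-stalk construction and the genuine local analysis of central arrangements.
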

Replacing vector spaces with categories and maps with functors we obtain the concept of an $\Hscr$-schober. Let us first specify  the meaning of the representation of $(\Cscr,\leq)$ in the (2-)category.

\begin{definition}
\label{def:representation}
  Let $\Tscr$ be a category (resp. 2-category). We say that
  $((T_C\in\Tscr)_{C\in \Cscr},(f_{C'C}:T_{C'}\to T_C)_{C'\leq C\in
    \Cscr})$
  is a representation of $(\Cscr,\leq)$ in the category
  (resp. $2$-category) $\Tscr$ if for $C''\leq C'\leq C$ one has
  $f_{C'C}f_{C''C'}=f_{C''C}$ (resp. there are isomorphisms $\kappa_{C''C'C}:f_{C'C}f_{C''C'}\to f_{C''C}$  satisfying the standard compatibility condition for
  $C'''\leq C''\leq C'\leq C$).
\end{definition}
\begin{remark}
  In \cite[Definition 3.3]{BondalKapranovSchechtman} the natural
  isomorphisms of functors as above 
  $\kappa_{C''C'C}:f_{C'C}f_{C''C'}\to f_{C''C}$ 
for
  $C''\leq C'\leq C$ are kept as part of the data, and this data is referred to as a ``triangulated $2$-functor''.
An alternative terminology for this concept is ``pseudo-functor''.
\end{remark}

\begin{definition}\label{def:schober}
  An $\Hscr$-schober on $V_\CC$  is given by triangulated categories $\Escr_C$, 
  $C\in \Cscr$,  adjoint
pairs of exact functors $(\delta_{CC'}:\Escr_C\to \Escr_{C'},\gamma_{C'C}:\Escr_{C'}\to \Escr_C)$ 
   for $C'\leq C$ such that
$(\Escr_C,(\delta_{C'C})_{C'C})$, 
is a representation of
$(\Cscr,\geq)$
in the $2$-category of
  triangulated categories satisfying  the following conditions:
 \begin{enumerate}
 \item[(M)] The unit of the adjunction $(\delta_{CC'},\gamma_{C'C})$
defines a natural isomorphism
   $\id_{\Escr_C}\xrightarrow{\cong}\gamma_{C'C}\delta_{CC'}$ for $C'\leq C$ , and
   thus $\phi_{C_1C_2}:=\gamma_{C'C_2}\delta_{C_1C'}$ for
   $C'\leq C_1,C_2$ is well defined up to canonical
natural   isomorphism.
\item[(I)]
$\phi_{C_1C_2}$ is an equivalence for every $C_1\neq C_2$ of the same dimension $d$ lying in the same $d$-dimensional affine space which share a facet.
\item[(T)]
For collinear triples of faces $(C_1,C_2,C_3)$ with common face $C_0$ the counit of the adjunction $(\delta_{C_0C_2},\gamma_{C_2C_0})$
 defines a natural isomorphism
$\phi_{C_2C_3}\phi_{C_1C_2}\xrightarrow{\cong} \phi_{C_1C_3}$.
\end{enumerate}
\end{definition}
\subsection{Equivariant \boldmath $\Hscr$-schobers}
Assume $V$ is equipped with an affine, $\Hscr$-preserving, group action by a group $\Gscr$.
In that case $\Cscr$ is of course also preserved. A \emph{$\Gscr$-action}
on an $\Hscr$-schober on $V_\CC$ is a collection of exact functors
\[
\phi_{g,C}:\Escr_C\r \Escr_{gC}
\]
for $g\in \Gscr$, $C\in \Cscr$, enhanced with natural isomorphisms $\phi_{h,gC}\phi_{g,C}\cong \phi_{hg,C}$ satisfying the obvious compatibility
for triple products in $\Gscr$. Moreover we should have pseudo-commutative diagrams for every $C'<C$:
\begin{equation}
\label{eq:deltadiagram}
\xymatrix{
\Escr_C\ar[r]^{\phi_{g,C}}\ar[d]_{\delta_{C,C'}}&\Escr_{gC}\ar[d]^{\delta_{gC,gC'}}\\
\Escr_{C'}\ar[r]_{\phi_{g,C'}}&\Escr_{gC'}
}
\end{equation}
so that the implied natural isomorphism $\delta_{gC,gC'}\phi_{g,C}\cong \phi_{g,C'}\delta_{C,C'}$ should again satisfy a number of obvious compatibilities. Note that
by adjointness we automatically have similar diagrams as \eqref{eq:deltadiagram} for the $\gamma$'s.
An $\Hscr$-schober
equipped with a $\Gscr$-action will be called a \emph{$\Gscr$-equivariant $\Hscr$-schober}.
\begin{remark}
\label{rem:stack}
Below we think of a $\Gscr$-equivariant $\Hscr$-schober as a perverse schober on the stack $V_{\CC}/\Gscr$. This is especially interesting if $\Gscr$
acts freely and discretely on~$V$ as then $V_{\CC}/\Gscr$ is a complex manifold.
\end{remark}
\subsection{\boldmath $\Hscr$-schobers versus spherical pairs}
\label{sec:spherical}
$\Hscr$-schobers form a satisfactory categorification of perverse
sheaves on hyperplane arrangements, but it is important to observe
that it is not the strongest possible notion. To see this it is
instructive to consider the example of the disc, i.e.\ the complex hyperplane
arrangement $(\CC,0)$ with the real version $\Hscr=(\RR,0)$. In that case Kapranov and Schechtman identify
 perverse schobers  with
spherical functors (see (1.8) in \cite{KapranovSchechtmanSchobers}) and it
was observed by Halpern-Leistner and Shipman in \cite[Theorem 3.15]{halpern2016autoequivalences} that spherical
functors are essentially the same as 
 \emph{$4$-periodic semi-orthogonal decompositions}
\begin{equation}\label{eq:4sod}
\Escr_0=\la\Dscr_-,\Escr_-\ra=\la\Escr_-,\Dscr_+\ra=\la\Dscr_+,\Escr_+\ra=\la\Escr_+,\Dscr_-\ra.
\end{equation}
In other words this data is completely determined by an admissible subcategory $\Escr_+\subset \Escr_0$ 
whose mutation helix is 4-periodic
\[
\ldots,\Dscr_+,\Escr_+,\Dscr_-,\Escr_-,\Dscr_+,\Escr_+,\ldots
\]
\begin{observation}
\label{obs:stronger}
It seems that in actual examples,  notably the ones  that we  consider in this paper, 
$(\Escr_+,\Escr_0,\Escr_-):=(\Escr_{C_1}, \Escr_{C},\Escr_{C_2})$ 
for $C_1,C_2$ as in (I) and $C=C_1\wedge C_2$ 
often gives rise to
 a 4-periodic semi-orthogonal decompositions but this does not
follow from the definition of an $\Hscr$-schober.
\end{observation}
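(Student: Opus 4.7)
The observation asks us to verify that the $\Hscr$-schober constructed in this paper from a quasi-symmetric representation satisfies a strictly stronger condition at each codimension-one wall: the triple $(\Escr_{C_1},\Escr_C,\Escr_{C_2})$ (with $C = C_1\wedge C_2$) should assemble into the $4$-periodic semi-orthogonal decomposition~\eqref{eq:4sod}. I would proceed by unpacking the upcoming construction of $\Escr_C$ rather than arguing from the axioms of Definition~\ref{def:schober} alone, since the observation itself stresses that the $\Hscr$-schober axioms are insufficient.

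The categories $\Escr_C$ will presumably be realized as explicit ``magic window'' subcategories of $D(X^{ss,\chi}/G)$ associated to faces $C\subset X(T)^{\Wscr}_\RR$, and the fully faithful embeddings $\delta_{CC'}:\Escr_C\hookrightarrow\Escr_{C'}$ (axiom (M)) will correspond to inclusions of a larger-weight window into a smaller-weight one. For a facet $C$ with adjacent top-dimensional cells $C_1,C_2$, my first step would be to verify \emph{bi-admissibility}: one needs to show that each $\delta_{C_i C}:\Escr_{C_i}\hookrightarrow \Escr_C$ has a left adjoint in addition to the right adjoint $\gamma_{CC_i}$ already provided. For window subcategories this should be automatic, because both adjoints can be realized as weight truncations of a Fourier--Mukai functor against the tilting bundle constructed in \cite{SVdB}, so both the left and right orthogonals of $\Escr_{C_i}$ inside $\Escr_C$ are admissible.

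Bi-admissibility then produces all four decompositions in~\eqref{eq:4sod} with concretely identified complements $\Dscr_\pm$: they should be equivalent to the derived category of a Levi fixed-point stack $X^{\lambda,+}/L_\lambda$ (in the notation of \S\ref{sec:notation}) associated to the one-parameter subgroup $\lambda$ normal to the facet, twisted on either side by opposite characters dictated by the chamber. The $4$-periodicity then reduces to two checks: (i) the left complement of $\Escr_{C_1}$ and the right complement of $\Escr_{C_2}$ coincide (and similarly after swapping), and (ii) mutating through the adjacent factor realizes the rotation of the helix. Here the quasi-symmetric hypothesis is essential: it implies that $W/K_\lambda$ is itself quasi-symmetric and unimodular, so a Serre-type shift on $D(X^{\lambda,+}/L_\lambda)$ identifies the ``$+$'' window with the ``$-$'' window up to a known cohomological shift, closing the cycle.

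The main obstacle is bookkeeping: one must track the precise twists and Serre shifts on the Levi stack so that the helix closes after exactly four steps, rather than with a larger period. This is essentially equivalent to showing that the wall-crossing equivalence $\phi_{C_1 C_2}$ is a \emph{spherical twist} (rather than a generic autoequivalence), in the sense of \cite{halpern2016autoequivalences}. I expect this to fall out of the explicit tilting description together with the quasi-symmetry and unimodularity of $W/K_\lambda$, but it is genuinely outside the scope of the $\Hscr$-schober axioms, which is precisely the content of the observation.
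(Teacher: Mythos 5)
The mathematical content behind this observation is Proposition \ref{prop:mutation}, and your outline does not actually reach it. Two concrete problems. First, your identification of the orthogonal complements is too naive: in the paper the complement $\Escr_{C,C_1}=\Escr_{C_1}^\perp\cap\Escr_C$ is generated by the objects $\RInd^G_B(\chi\otimes\Oscr_{X^{\lambda,+}})$ where $\lambda$ ranges over a whole cone in $Y(T)^-_\RR$ (all $\lambda$ with $\langle\lambda,-\rangle\ge\langle\lambda,\chi\rangle$ on $\Delta_C$ and $\langle\lambda,\varepsilon\rangle>0$), not over a single one-parameter subgroup ``normal to the facet''. The wall lives in the small space $X(T)^\Wscr_\RR$ while the relevant $\lambda$ live in $Y(T)_\RR$, so the complement is not in general the derived category of a single Levi fixed-point stack, and the Serre-type computation you propose has no single stack to live on.

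Second, and more importantly, the step you defer to ``bookkeeping'' --- that the helix closes after exactly four steps --- is the entire point, and the paper's mechanism for it is different from what you sketch. The paper proves \emph{one} semi-orthogonal decomposition $\Escr_C=\langle\Escr_{C,C_1},\Escr_{C_1}\rangle$ explicitly (Corollary \ref{cor:maincor}, resting on the combinatorial Proposition \ref{prop:mainprop} about the complexes $C_{\lambda,\chi}$), and then obtains the remaining three by applying the contravariant duality $\Dscr=\uRHom_{X/G}(-,\Oscr_X)$, which reverses semi-orthogonal decompositions and satisfies $\Dscr(\Escr^c_C)=\Escr^c_{-C}$ and $\Dscr(\Escr^c_{C,C_1})=\Escr^c_{-C,-C_2}$ (Lemmas \ref{lem:duality} and \ref{lem:ddual}); these identities come from quasi-symmetry, Weyman's duality formula for $\RInd^G_B(\chi\otimes\Oscr_{X^{\lambda,+}})$, and the $\pm$-symmetry and translation invariance of the arrangement. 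The key point is that $\Dscr$ composed with $C\mapsto -C$ exchanges $C_1$ with $C_2$, which is exactly the rotation of the helix by two steps; no identification of $\phi_{C_1C_2}$ as a spherical twist is needed or made, and bi-admissibility is a consequence of having all four decompositions rather than an input. Your proposal identifies the correct target but replaces the two lemmas that actually close the cycle with a plausibility argument.
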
 We given a concrete example below (see Example \ref{ex:flober})
but first we note that the notion of a $4$-periodic semi-orthogonal decompositions can
be generalized to that of a \emph{spherical pair}.
\begin{definition}\cite[Definition 3.5]{KapranovSchechtmanSchobers}
Let $\Escr_0$ be a triangulated category with admissible subcategories $\Escr_\pm$ and semi-orthogonal decompositions 
\[
\la \Dscr_-,\Escr_-\ra =\Escr_0=\la \Dscr_+,\Escr_+\ra.
\] 
Let $i_\pm:\Dscr_+\to\Escr_0$, $\delta_\pm:\Escr_+\to\Escr_0$ be inclusions and let ${}^*i_\pm:\Escr_0\to \Dscr_\pm$, $\delta^*_\pm:\Escr_\pm\to\Escr_0$ be respective left and right adjoints. 
Then $\Escr_\pm\subset \Escr_0$ is a {\em spherical pair} if ${}^*i_\mp i_\pm:\Dscr_\pm\to\Dscr_\mp$, $\delta^*_\mp\delta_\pm:\Escr_\pm\to \Escr_\mp$ are equivalences. 
\end{definition}
\begin{proposition}\cite[Proposition 3.7]{KapranovSchechtmanSchobers}
If $\Escr_\pm\subseteq \Escr_0$ is a spherical pair, then ${}^*i_\mp\delta_\pm:\Escr_\pm\to \Dscr_\mp$ is a spherical functor.\footnote{For the notion of spherical functor to make sense one must
 assume that the triangulated categories are suitably enhanced, see \cite[Appendix A]{KapranovSchechtmanSchobers}.}
\end{proposition}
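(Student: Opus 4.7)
The plan is to verify sphericality of $F := {}^{*}i_-\,\delta_+ : \Escr_+ \to \Dscr_-$ by writing both its twist and its cotwist as explicit compositions of the four equivalences supplied by the spherical pair hypothesis. Composing right adjoints yields the right adjoint $R := \delta_+^{*}\, i_-$ of $F$. From the two semi-orthogonal decompositions $\Escr_0 = \langle \Dscr_\pm, \Escr_\pm \rangle$ one extracts two distinguished triangles of endofunctors of $\Escr_0$ in which $\id_{\Escr_0}$ is glued from the projections $\delta_\pm\delta_\pm^{*}$ onto $\Escr_\pm$ and $i_\pm\,{}^{*}i_\pm$ onto $\Dscr_\pm$.

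To compute the cotwist $C_F$, apply $\delta_+^{*}(-)\delta_+$ to the triangle arising from $\langle \Dscr_-, \Escr_- \rangle$. Since $\delta_+$ is fully faithful one has $\delta_+^{*}\delta_+ = \id_{\Escr_+}$, while by definition $\delta_+^{*}\,i_-\,{}^{*}i_-\,\delta_+ = R F$; a rotation then identifies $C_F$, up to shift, with $\delta_+^{*}\delta_-\delta_-^{*}\delta_+$, which is the composition of the spherical-pair equivalences $\delta_-^{*}\delta_+$ and $\delta_+^{*}\delta_-$ and hence itself an equivalence. Dually, apply ${}^{*}i_-(-)\,i_-$ to the triangle arising from $\langle \Dscr_+, \Escr_+ \rangle$; now ${}^{*}i_-\,i_- = \id_{\Dscr_-}$, and one identifies the twist $T_F$ with ${}^{*}i_-\,i_+\,{}^{*}i_+\,i_-$, again a composition of two of the hypothesis equivalences and therefore an equivalence.

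With both twist and cotwist invertible one concludes, via the standard characterization of spherical functors (in the enhanced setting assumed in the footnote, this is the content of the Anno--Logvinenko criterion), that $F$ is spherical. The main technical subtlety lies not in the algebraic identities above, which are formal, but in verifying that the connecting natural transformations in the two SOD triangles genuinely coincide with the unit $\id_{\Escr_+} \to RF$ and counit $FR \to \id_{\Dscr_-}$ of the adjunction $(F,R)$, so that the identifications above really compute $T_F$ and $C_F$ themselves rather than abstract cones; this coherence is where the enhanced/DG framework is actually used.
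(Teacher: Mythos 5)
This proposition is quoted in the paper from \cite[Proposition 3.7]{KapranovSchechtmanSchobers} without proof, so there is no in-paper argument to compare against; your reconstruction is the standard one and matches the original Kapranov--Schechtman argument. The identifications are correct: applying $\delta_+^{*}(-)\delta_+$ to the decomposition triangle $\delta_-\delta_-^{*}\to\id\to i_-{}^{*}i_-$ and ${}^{*}i_-(-)i_-$ to $\delta_+\delta_+^{*}\to\id\to i_+{}^{*}i_+$ does exhibit the cotwist and twist as $\delta_+^{*}\delta_-\delta_-^{*}\delta_+$ and ${}^{*}i_-i_+{}^{*}i_+i_-$ respectively, and the connecting maps are the unit and counit of $(F,R)$ by the usual description of units/counits of composed adjunctions; your closing remark correctly locates the only real subtlety (functoriality of the SOD triangles and the two-out-of-four criterion) in the enhanced setting, which is exactly what the paper's footnote is flagging.
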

The fact that 4-periodic semi-orthogonal decompositions yield spherical pairs is \cite[Proposition B.3]{BondalBodzenta}.  To distinguish the two notions
we refer to the former as a \emph{mutation spherical pair}. 
\begin{remark}Note that the notion of spherical pair is strictly more
general. Indeed for any semi-orthogonal decomposition $\Escr_0=\langle \Escr^\perp_{+},\Escr_+\rangle$ we have a \emph{trivial spherical pair} given by $(\Escr_+,\Escr_+)$
which is a mutation spherical pair if and only if $\Escr_0=\Escr^\perp_+\oplus \Escr_+$.
\end{remark}
\begin{remark} If we have a semi-orthogonal decomposition  $\Escr_0=\langle \Escr^\perp_{+},\Escr_+\rangle$ such that $\Escr_0$ has a Serre functor $S$ then the mutation helix is
\[
\ldots,\Escr^\perp_+,\Escr_+,S^{-1}(\Escr_+^\perp),S^{-1}(\Escr_+),S^{-2}(\Escr^\perp_+),S^{-2}(\Escr_+),\ldots
\]
In other words we obtain a mutation spherical pair if and only if the semi-orthogonal decomposition is preserved by $S^2$ and in that case $\Escr_-=S(\Escr_+)$, $\Escr^\perp_-=S(\Escr_+^\perp)$.

This observation holds true if $\Escr_0$ is linear over a commutative noetherian ring possessing a dualizing complex and $S$ is the corresponding relative Serre functor (if it exists).
\end{remark}

\begin{example}[\protect{\cite[Example 1.5]{BondalKapranovSchechtman},\cite{BondalBodzenta}}]
\label{ex:flober}
  Consider the example where $Z$ is the three dimensional quadratic
  cone, let $p_{\pm}:X_{\pm}\r Z$ be its two crepant resolutions
  and let $X_0=X_+\times_Z X_-$. Let $\Hscr=(\RR,0)$ (so that $\Hscr_\CC=(\CC,0)$). The corresponding cell complex is $\Cscr=(\RR_{>0},0,\RR_{<0})$.

 Put
  $\delta_{\pm}:=Lp^\ast_{\pm}:D(X_{\pm})\r D(X_0)$,   $\gamma_{\pm}:=Rp_{\ast,\pm}:D(X_0)\r D(X_\pm)$, $(\Escr_{\RR_{>0}},\Escr_0,\allowbreak\Escr_{\RR_{<0}}):=(D(X_+),D(X_0),D(X_-))$.
Then the data $(\Escr,\delta,\gamma)$ forms an $\Hscr$-schober\footnote{This
$\Hscr$-schober is also called a \emph{flober} since $X_+\r X_-$ is a flop.} \emph{but} it is easy to
verify that it is \emph{not a spherical pair} and hence it is certainly not a mutation
spherical pair. On the other hand, it was shown in \cite{BondalBodzenta} (see also \cite{donovan2019mirror}) how to fix this. One should replace $D(X_0)$ by $D(X_0)/\Kscr$ where $\Kscr$ is the common kernel 
of $Rp_{\ast,\pm}$. After doing so one really obtains a mutation spherical pair.
\end{example}

\section{The SKMS associated to a quasi-symmetric representation}\label{sec:SKMSqs}
\emph{From now on we assume, unless otherwise specified, that $W$ is a quasi-symmetric $G$-representation (see \cite[\S1.6]{SVdB} and \S\ref{sec:notation}) and that the generic $T$-stabilizer is finite.}
We recall hyperplane arrangements constructed in \cite{HLSam} .  Let
$(\beta_i)_i$ be the $T$-weights of $W$.  
Following \cite{SVdB3,SVdB}, 
we introduce 
$\overline{\Sigma}=\{\sum_ia_i\beta_i\mid a_i\in [-1,0]\}\subseteq X(T)_\RR$ and
\[
\Delta_0=(1/2)\overline{\Sigma}.
\] 
The hypothesis on the generic $T$-stabilizer implies that $\overline{\Sigma}$ is full dimensional.
Set
$\Delta=-\overline{\rho}+\Delta_0$, and denote by $(H_i)_i$ the
affine hyperplanes in $X(T)_\RR$ spanned by facets of $\Delta$.
Put
$\tilde\Hscr=\bigcup_i (-H_i+X(T))$ and
\[\Hscr=\bigcup_i (-H_i+X(T))\cap X(T)_\RR^\Wscr.\]
Let $(\Cscr,\leq)$ be the
poset of faces in $X(T)_\RR^{\Wscr}$ corresponding to $\Hscr$. Similarly let $(\tilde{\Cscr},\leq)$ 
be the poset of faces in $X(T)_{\RR}$ corresponding
to $\tilde{\Hscr}$. Note that $\Hscr$ is clearly invariant under translation by $X(T)^\Wscr$.

\begin{definition}[\protect{\cite[\S6]{HLSam}, also \cite[Remark before \S1.2]{Donovan}}] \label{def:SKMS}The SKMS associated to the data $(G,W)$ is 
\[
\left.\left(X(T)^\Wscr_{\CC}\setminus \bigcup_{H\in \Hscr} H\otimes\CC\right)\right/X(T)^\Wscr.
\]
\end{definition}
\section{Perverse schobers on the SKMS associated to a quasi-symmetric representation}\label{sec:SKMSschober}
\subsection{Main result}
\label{sec:mainth}
Let $\Hscr$, $\Cscr$ be as in \S\ref{sec:SKMSqs}.
In this section we construct an $X(T)^\Wscr$ invariant $\Hscr$-schober
on $\Cscr$. Via Definition \ref{def:SKMS} this may then be thought of as a perverse schober on the partial compactification $X(T)_\CC^\Wscr/X(T)^\Wscr$ of the SKMS of a quasi-symmetric representation.

We now give the construction of the $\Hscr$-schober.
For each $C\in \Cscr$ choose $\deltaC_C\in C$. 
Then
\begin{equation*}
\label{eq:lattice}
(\deltaC_C+\Delta)\cap X(T)
\end{equation*}
is independent of $\deltaC_C\in C$ by Lemma \ref{lem:lema33} below.
We put
\begin{align*}
\Delta_C&=\deltaC_C+\Delta,\\
\Lscr_C&=\Delta_C\cap X(T)^+.
\end{align*}
Obviously $\Lscr_C$ also does not depend on $\deltaC_C\in C$. By Lemma \ref{lem:lema33} below, $C'\le C$ implies $\Lscr_C\subseteq \Lscr_{C'}$.

Put $X=\Spec \Sym(W)=W^\ast$ and define
\begin{align*}
P_\chi&=V(\chi)\otimes_k \Oscr_X\in D(X/G),\\
P_C&=\bigoplus_{\chi\in \Lscr_C} P_\chi,\\
\Escr_C&=\la P_C\ra \subset D(X/G).
\end{align*}
We let $\Escr^c_C$ be the full subcategory of $\Escr_C$ consisting of compact objects. 
For $C'\leq C$ let  
\[\delta_{CC'}:\Escr_{C}\hookrightarrow \Escr_{C'}\]
  be the inclusion.  Clearly $\delta_{CC'}$ preserves compact
objects. By Lemma \ref{lem:adjoints} below $\delta_{CC'}$ has a right adjoint $\gamma_{C'C}$ which also preserves compact objects. Finally  for $\chi\in X(G)=X(T)^\Wscr$ we define
\[
\phi_{\chi,C}:\Escr_{C}\r \Escr_{\chi+C}:\Mscr\mapsto \chi\otimes \Mscr.
\]
The following is our main result.
\begin{proposition}\label{prop:schober}
The data $((\Escr_C)_{C},(\gamma_{C'C})_{C'C},(\delta_{CC'})_{C'C})$  with the $X(T)^\Wscr$-action $(\phi_{\chi,C})_{\chi,C}$ defines an $X(T)^\Wscr$-equivariant $\Hscr$-schober on $X(T)_\CC^\Wscr$.

Moreover, 
\begin{enumerate}
\item For collinear $C_1,C,C_2$ with $C<C_1,C_2$, $(\Escr_{C_1},\Escr_{C_2})$ forms a mutation spherical pair in $\Escr_C$.
\item The same results are true with $\Escr^c_C$ replacing $\Escr_C$.
\end{enumerate}
\end{proposition}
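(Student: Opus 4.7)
The plan is to verify the $\Hscr$-schober axioms and the sharper mutation spherical pair statement by reducing everything to a careful comparison of the subcategories $\Escr_C$ inside $D(X/G)$ as $C$ varies. I would open with the two preliminary lemmas that the text advertises: that $\Lscr_C$ is independent of the choice of $\deltaC_C \in C$, and that $C' \le C$ implies $\Lscr_C \subseteq \Lscr_{C'}$. The second gives the inclusion $\Escr_C \hookrightarrow \Escr_{C'}$, so that the representation of $(\Cscr, \ge)$ condition and condition (M) reduce to the tautology that for a fully faithful inclusion of triangulated subcategories the unit of the inclusion/right-adjoint adjunction is an isomorphism. The $X(T)^\Wscr$-equivariance is straightforward because tensoring by a character $\chi \in X(G) = X(T)^\Wscr$ is an exact autoequivalence of $D(X/G)$ that sends $P_\mu$ to $P_{\mu+\chi}$ and translates $\Lscr_C$ by $\chi$, hence identifies $\Escr_C$ with $\Escr_{C+\chi}$ and commutes strictly with the inclusions $\delta_{CC'}$.

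The heart of the proof is establishing item (1), which is strictly stronger than conditions (I) and (T) of Definition \ref{def:schober}. So I would concentrate on two codimension-0 cells $C_1, C_2$ meeting along a facet with infimum $C = C_1 \wedge C_2$. Here the plan is to partition the lattice points of $\Lscr_C$ using the linear functional $\lambda$ defining the facet: those weights $\chi$ whose pairing with $\lambda$ lies in one of three strips (purely inside $\Lscr_{C_1}$, purely inside $\Lscr_{C_2}$, or common to both). This partition should induce an orthogonal filtration and hence semi-orthogonal decompositions
\[
\Escr_C = \langle \Dscr_-, \Escr_{C_2}\rangle = \langle \Escr_{C_2}, \Dscr_+\rangle = \langle \Dscr_+, \Escr_{C_1}\rangle = \langle \Escr_{C_1}, \Dscr_-\rangle,
\]
with $\Dscr_\pm$ the subcategories generated by the weights that are in $\Lscr_C$ but on only one side of the wall. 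The vanishing of the relevant $\Ext$-groups between $P_\chi$'s across the strips is exactly the kind of Bott-vanishing / window-subcategory computation that is available in the quasi-symmetric setting of \cite{HLSam,SVdB}, and this is where I would invoke those results. The 4-periodicity of the resulting mutation helix should then come for free from the action of the Serre functor of $D(X/G)$ (twist by $\det W$ combined with $\overline{\rho}$-shift), whose square preserves the Serre-invariant lattice $\Lscr_C$ for quasi-symmetric $W$ by a direct computation using $\sum_i \beta_i = 0$ on every line through the origin.

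Once item (1) is established, conditions (I) and (T) are essentially automatic. For (I), a mutation spherical pair yields the desired equivalence $\phi_{C_1 C_2}: \Escr_{C_1} \to \Escr_{C_2}$ by \cite[Proposition B.3]{BondalBodzenta} applied to the 4-periodic decomposition above. For (T), given a collinear triple $(C_1, C_2, C_3)$ with common face $C_0$, I would check that the counit map $\delta_{C_0 C_2} \gamma_{C_2 C_0} \to \id$ induces the required isomorphism $\phi_{C_2 C_3} \phi_{C_1 C_2} \to \phi_{C_1 C_3}$; this reduces to the observation that the composite of the two wall-crossing equivalences associated to $C_1 \to C_2$ and $C_2 \to C_3$ is implemented by the same $P_\chi$'s that define the direct equivalence $\phi_{C_1 C_3}$, together with transitivity of the unit/counit identifications.

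The main obstacle, and the step requiring the most care, will be verifying the semi-orthogonality vanishing that underlies the four decompositions and confirming 4-periodicity of the mutation helix. In particular, it has to be shown that the $\Ext$-vanishing between the three weight-strips is symmetric in the two directions (so that the resulting SODs can be read off from either side of the facet) and that mutating twice through a wall recovers the original category, which is the concrete manifestation of the Calabi-Yau / unimodular structure enforced by quasi-symmetry. Finally, passing to compact objects in item (2) is routine: $\delta_{CC'}$ and $\gamma_{C'C}$ preserve compact objects by Lemma \ref{lem:adjoints}, the $P_\chi$ are compact generators of $\Escr_C$, and an admissible subcategory generated by compacts restricts compactly, so each of the structural pieces survives the restriction $\Escr_C \leadsto \Escr_C^c$.
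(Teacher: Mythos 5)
Your overall architecture (reduce the axioms (M), (I), (T) and the equivariance to a single mutation--spherical--pair statement at each wall, then pass to compact objects) is the same as the paper's, but the two steps you yourself identify as the heart of the argument both contain genuine gaps. First, the complements $\Dscr_\pm$ in the four semi-orthogonal decompositions are \emph{not} generated by the objects $P_\chi$ with $\chi\in\Lscr_C$ lying on only one side of the wall. For dominant weights $\chi,\mu$ one has $\Hom_{X/G}(P_\chi,P_\mu)=(V(\chi)^\ast\otimes V(\mu)\otimes\Sym(W))^G$, which is generically non-zero in both directions (the $P_\chi$ for $\chi$ in a window form a tilting-type generator, not an exceptional sequence), so the asserted ``vanishing of the relevant $\Ext$-groups between $P_\chi$'s across the strips'' is false and no orthogonal filtration by strips of $\Lscr_C$ exists at that level. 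The paper's complement $\Escr_{C,C_1}=\Escr_{C_1}^\perp\cap\Escr_C$ is instead generated by the modified objects $\RInd^G_B(\chi\otimes\Oscr_{X^{\lambda,+}})$ supported on the subvarieties $X^{\lambda,+}$; producing even \emph{one} of the four decompositions requires the combinatorial Proposition \ref{prop:mainprop} on the complexes $C_{\lambda,\chi}$ (packaged as Corollary \ref{cor:maincor}), with semi-orthogonality coming from the $\langle\lambda,-\rangle$-weight bound of Lemma \ref{lem:weyman1}. This machinery is missing from your outline.

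Second, the $4$-periodicity does not ``come for free'' from the Serre functor of $D(X/G)$. The categories $\Escr_C\cong D(\Lambda_C^\circ)$ are not proper over $k$, so the mutation helix is not computed by a Serre functor; worse, for unimodular $W$ the relative Serre functor over $k[X]^G$ of an NCCR is a shift, and applying the helix formula formally would give $\Escr_{C_2}=S(\Escr_{C_1})=\Escr_{C_1}$, which is false. The paper's substitute is the explicit anti-equivalence $\Dscr=\uRHom_{X/G}(-,\Oscr_X)$: one shows $\Dscr(\Escr^c_C)=\Escr^c_{-C}$ (Lemma \ref{lem:duality}) and, via Weyman's duality formula for $\RInd^G_B(\chi\otimes\Oscr_{X^{\lambda,+}})$ together with quasi-symmetry, $\Dscr(\Escr^c_{C,C_1})=\Escr^c_{-C,-C_2}$ (Lemma \ref{lem:ddual}); combining this with the central symmetry $C\mapsto -C$ of $\Hscr$ and the interchange of $C_1,C_2$ converts the one decomposition of Corollary \ref{cor:maincor} into the other three. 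Relatedly, your verification of (T) by ``transitivity of unit/counit identifications'' omits the one non-formal input, namely the orthogonality $\Escr_{C_1,C_2}\subset\Escr_{C_3}^\perp$ for neighbouring $C_1<C_2$ on the segment, which again rests on the explicit generators of $\Escr_{C_1,C_2}$ and Lemma \ref{lem:weyman1}. The reduction to compact objects and the equivariance are indeed routine, as you say.
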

\begin{remark} Denote by $X^s$ (resp.\ $X^u$) the stable (resp.\ non-stable) locus of $X$
and by $X^{ss,\chi}$ the
  semi-stable locus of $X$ for a linearization of the
  $G$-action by $\chi\otimes_k \Oscr_X$ for some $\chi\in X(G)=X(T)^\Wscr$. Assume that
  $\emptyset \neq X^s=X^{ss,\chi}$ and that $C$ is maximal. Then 
by \cite[Theorem 3.2]{HLSam}
 the restriction
  $\Pscr_C$ of $P_C$ to $X^{ss,\chi}$ is a tiling bundle on $X^{ss,\chi}/G$ and hence
  \[
\Escr_C\cong D(X^{ss,\chi}/G).
\]
As a corollary one obtains that all $\Escr_C$ (for maximal $C$) are in fact derived equivalent. This is also
a consequence of the more general Proposition \ref{prop:schober}. 

 If $W$ is moreover \emph{generic} (see \cite[Definition 1.3.4]{SVdB})  
then by a suitable modification of \cite[Theorem 1.6.3]{SVdB} it follows that $\Lambda_{C}:=\End_{X/G}(P_C)$ is a so-called \emph{non-commutative crepant resolution} of
  $k[X]^G$.
\end{remark}
\subsection{Properties of \boldmath $\Hscr$}
The following trivial lemma was used. 
\begin{lemma}\label{lem:lema33}
Let $E$ be vector space over $\RR$ and let $\Delta\subset E$ be a full dimensional convex polytope.
Let $S$ be a discrete subset of $E$.
Let $(H_i)_{i\in I}$ be the  set of hyperplanes spanned by the facets of $\Delta$.
Let $\Hscr$ be the affine hyperplane
arrangement given by $-H_i+s$, $s\in S$, $i\in I$ and let $E=\coprod_{C\in \Cscr}C$ be the corresponding cell decomposition
of $E$.  Then the following holds:
\begin{enumerate}
\item \label{it:1}
$
(\deltaC+\Delta)\cap S
$
does not depend on $\deltaC\in C$ for $C\in \Cscr$. Write $S_C:=(\deltaC+\Delta)\cap S$. 
\item \label{it:2}
If $C'\le C$ then $S_{C}\subseteq S_{C'}$.
\end{enumerate}
\end{lemma}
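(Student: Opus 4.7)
The plan is to reduce both parts of the lemma to a single geometric observation: the condition ``$s\in\xi+\Delta$'' is equivalent, upon rearranging, to a condition on $\xi$ that is constant on cells of $\Hscr$.

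First I would rewrite the membership condition. For any $s\in S$ and $\xi\in E$, one has $s\in\xi+\Delta$ if and only if $\xi\in s-\Delta$. The closed convex polytope $s-\Delta$ has facets lying in the translated hyperplanes $-H_i+s$, and these are precisely among the hyperplanes of the arrangement $\Hscr$ by construction. Consequently $s-\Delta$ can be written as an intersection of finitely many closed half-spaces whose bounding hyperplanes all lie in $\Hscr$.

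For part \eqref{it:1}, I would fix $s\in S$ and use the defining property of the cells of $\Hscr$: for every hyperplane $h\in\Hscr$, the relatively open cell $C$ lies either entirely in one open half-space of $h$ or entirely inside $h$ itself. Applying this to each of the (finitely many) bounding hyperplanes of $s-\Delta$, I conclude that $C$ is either contained in every closed half-space defining $s-\Delta$ (hence $C\subseteq s-\Delta$) or, for some $j$, $C$ lies in the open half-space not containing $s-\Delta$ (hence $C\cap(s-\Delta)=\emptyset$). Either way the condition $\xi\in s-\Delta$ is independent of the choice of $\xi\in C$, which gives \eqref{it:1}.

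For part \eqref{it:2}, suppose $s\in S_C$, so that $C\subseteq s-\Delta$. Since $s-\Delta$ is closed, this implies $\overline{C}\subseteq s-\Delta$. The hypothesis $C'\leq C$ means $C'\subseteq\overline{C}$, so $C'\subseteq s-\Delta$, hence $s\in\xi'+\Delta$ for any $\xi'\in C'$, giving $s\in S_{C'}$.

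There is no real obstacle: the whole argument rests on the fact that the facet hyperplanes of every translate $s-\Delta$ are declared to be part of $\Hscr$, which forces cells to respect these polytopes. The only thing to be slightly careful about is to work with closed half-spaces (so that a cell lying in a bounding hyperplane still counts as being inside the polytope), but this is automatic from the relative openness of cells in a hyperplane arrangement.
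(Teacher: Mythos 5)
Your proof is correct and is essentially the paper's argument in geometric rather than combinatorial clothing: the paper encodes the same facts via the sign functions $\epsilon_{i,s}(x)=\epsilon(\phi_i(s-x))$, which are constant on cells and record exactly your observation that each cell lies on a definite side of (or inside) every bounding hyperplane of $s-\Delta$, and part (2) follows there from the sign relation $\epsilon_{s,i}(C')=t_{s,i}\epsilon_{s,i}(C)$, $t_{s,i}\in\{0,1\}$, which is your closedness argument.
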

\begin{proof}
Assume
\[
\Delta=\{x\in E\mid \phi_i(x)\le 0,i\in I\}
\]
where $\phi_i(x)=0$ are the equations for $H_i$. Then the equations for
the hyperplanes in $\Hscr$ are $\phi_i(s-x)=0$ for $i\in I$, $s\in S$.
For $a\in \RR$ let $\epsilon(a)=-1,0,1$ depending on whether $a<0$, $a=0$ or $a>0$. Then
the functions $\epsilon_{i,s}(x):=\epsilon(\phi_i(s-x))$ are constant on $C$ (and in fact they define $\Cscr$).
Denote their values by $\epsilon_{i,s}(C)$.

Now assume $\deltaC\in C$. Then
\begin{align*}
(\deltaC+\Delta)\cap S&=\{s\in S\mid \phi_i(s-\deltaC)\le 0,i\in I\}\\
&=\{s\in S\mid  \epsilon_{i,s}(\deltaC)\in \{-1,0\},i\in I\}\\
&=\{s\in S\mid  \epsilon_{i,s}(C)\in \{-1,0\},i\in I\}
\end{align*}
which is clearly independent of $\deltaC\in C$.

If $C'$ is as in the statement of the lemma then we have $\epsilon_{s,i}(C')=t_{s,i}\epsilon_{s,i}(C)$ for some $t_{s,i}\in \{0,1\}$.
This implies the inclusion $(\deltaC+\Delta)\cap S\subseteq (\deltaC'+\Delta)\cap S$.
\end{proof}
Let the notation be as in \S\ref{sec:SKMSqs}, \S\ref{sec:mainth}.
For use below we give some more trivial lemmas.
\begin{lemma} 
\label{eq:deltainvariance} 
Let $\sigma\in \pm \Wscr$. Then $\sigma(\Delta)=\xi_\sigma+\Delta$ for $\xi_\sigma\in X(T)$. If $\sigma=-w_0$ then $\xi_\sigma=0$.
\end{lemma}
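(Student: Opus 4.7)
The plan is to reduce the lemma to two elementary facts: the $\Wscr$-invariance of $\overline{\Sigma}$, and the centrally-symmetric nature of $\overline{\Sigma}$ implied by quasi-symmetry. Both $\sigma \in \Wscr$ and $\sigma \in -\Wscr$ are then handled by a direct calculation, with $\sigma = -w_0$ being a favorable cancellation at the end.

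First I would establish the $\Wscr$-invariance of $\Delta_0 = (1/2)\overline{\Sigma}$. Since $W$ is a $G$-representation and $\Wscr = N(T)/T$, the multiset $(\beta_i)_i$ of $T$-weights is $\Wscr$-invariant, so for $w \in \Wscr$ we have $w(\overline{\Sigma}) = \{\sum a_i (w\beta_i) : a_i \in [-1,0]\} = \overline{\Sigma}$. Next, I would use quasi-symmetry: summing $\sum_{\beta_i \in \ell} \beta_i = 0$ over all lines $\ell$ gives $\sum_i \beta_i = 0$. Substituting $b_i = -1 - a_i$ in the definition of $\overline{\Sigma}$ then yields $-\overline{\Sigma} = \overline{\Sigma} + \sum_i \beta_i = \overline{\Sigma}$, i.e.\ $\Delta_0$ is symmetric about the origin.

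For $\sigma \in \Wscr$, write
\[
\sigma(\Delta) = -\sigma\overline{\rho} + \sigma(\Delta_0) = -\sigma\overline{\rho} + \Delta_0 = \Delta + (\overline{\rho} - \sigma\overline{\rho}).
\]
By the standard identity $\overline{\rho} - \sigma\overline{\rho} = \sum_{\alpha>0,\ \sigma^{-1}\alpha<0}\alpha$, this lies in the root lattice, and hence in $X(T)$. So $\xi_\sigma = \overline{\rho} - \sigma\overline{\rho} \in X(T)$.

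For $-\sigma \in -\Wscr$, combine both preparatory facts:
\[
(-\sigma)(\Delta) = \sigma\overline{\rho} - \sigma(\Delta_0) = \sigma\overline{\rho} + \sigma(-\Delta_0) = \sigma\overline{\rho} + \Delta_0 = \Delta + (\sigma\overline{\rho} + \overline{\rho}),
\]
using $-\Delta_0 = \Delta_0$ (from quasi-symmetry) and the $\Wscr$-invariance of $\Delta_0$. Now $\sigma\overline{\rho} + \overline{\rho} = 2\overline{\rho} - (\overline{\rho} - \sigma\overline{\rho})$; both terms lie in $X(T)$, so $\xi_{-\sigma} = \sigma\overline{\rho} + \overline{\rho} \in X(T)$. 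Specializing to $\sigma = w_0$, i.e.\ the case $-\sigma = -w_0$, gives $\xi_{-w_0} = w_0\overline{\rho} + \overline{\rho} = -\overline{\rho} + \overline{\rho} = 0$ since $w_0$ sends positive roots to negative ones. There is no serious obstacle here; the only non-cosmetic input beyond standard Weyl-group facts is the observation $\sum_i \beta_i = 0$, which is immediate from quasi-symmetry.
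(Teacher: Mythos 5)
Your proof is correct and follows essentially the same route as the paper's: write $\Delta=-\overline{\rho}+\Delta_0$, use the $\Wscr$-invariance and central symmetry of $\Delta_0$ (the paper derives the latter from unimodularity, which quasi-symmetry implies via $\sum_i\beta_i=0$ exactly as you note), reduce to $\sigma\overline{\rho}-\overline{\rho}$ lying in the root lattice, and conclude the $\sigma=-w_0$ case from $w_0\overline{\rho}=-\overline{\rho}$. The only cosmetic difference is that you invoke the closed formula $\overline{\rho}-\sigma\overline{\rho}=\sum_{\alpha>0,\,\sigma^{-1}\alpha<0}\alpha$ where the paper reduces to simple reflections.
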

\begin{proof}
We have $\Delta=-\overline{\rho}+\Delta_0$. $\Delta_0$ is $\Wscr$-invariant and the fact that $W$ is unimodular also implies that $\Delta_0$ is invariant under $x\mapsto -x$. So it  is
sufficient that $\sigma(\overline{\rho})-\overline{\rho}\in X(T)$. This is standard (it is enough to consider the case $\sigma(x)=-x$ which is trivial and the case that $\sigma$ is
a simple reflection $s_\alpha$ in which case we have $\sigma(\overline{\rho})-\overline{\rho}=-\alpha$). The second claim follows from the fact that $-w_0\overline{\rho}=\overline{\rho}$.
\end{proof}
\begin{lemma} \label{rem:Hinvariance}
$\tilde{\Hscr}$ is invariant under $\pm \Wscr$. $\Hscr$ is stable under $x\mapsto -x$.
The same holds for $\tilde{\Cscr}$ and $\Cscr$.
\end{lemma}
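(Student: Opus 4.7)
The plan is to deduce everything from Lemma \ref{eq:deltainvariance}. For $\sigma\in\pm\Wscr$, that lemma gives $\sigma(\Delta)=\xi_\sigma+\Delta$ with $\xi_\sigma\in X(T)$. Since $\sigma$ is an affine isomorphism it sends facets of $\Delta$ bijectively onto facets of $\xi_\sigma+\Delta$, hence it permutes the affine hyperplanes $\{H_i\}$ up to a translation by $\xi_\sigma$: for every $i$ there is a $j$ with $\sigma(H_i)=\xi_\sigma+H_j$. Therefore
\[
\sigma(-H_i+X(T))=-\sigma(H_i)+\sigma(X(T))=-(\xi_\sigma+H_j)+X(T)=-H_j+X(T),
\]
using $\xi_\sigma\in X(T)$ and $\sigma(X(T))=X(T)$ (which holds for any $\sigma\in\pm\Wscr$). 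Taking the union over $i$ gives $\sigma(\tilde\Hscr)=\tilde\Hscr$.

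For the second statement, I would specialize the above to $\sigma=-\id\in\pm\Wscr$, which preserves $X(T)^\Wscr_\RR$. Since $\tilde\Hscr$ is stable under $-\id$, intersecting with $X(T)^\Wscr_\RR$ (which is also stable under $-\id$) yields that $\Hscr$ is stable under $x\mapsto -x$.

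Finally, the cell decompositions $\tilde\Cscr$ and $\Cscr$ are determined intrinsically by the hyperplane arrangements $\tilde\Hscr$ and $\Hscr$: the connected components of the complement of the arrangement (together with their faces) are the cells. Since any affine isomorphism of the ambient space that preserves the arrangement must send cells to cells, the invariance properties transfer automatically to $\tilde\Cscr$ and $\Cscr$. No step presents any real difficulty; the only point worth flagging is the bookkeeping that ensures the translation $\xi_\sigma$ disappears after translating by $X(T)$, which is precisely the content of $\xi_\sigma\in X(T)$ provided by Lemma \ref{eq:deltainvariance}.
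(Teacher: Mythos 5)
Your argument is correct and is essentially the paper's own proof: both rest on Lemma \ref{eq:deltainvariance}, the paper phrasing the point as ``$\tilde\Hscr$ is the set of supporting hyperplanes of the translates $-\Delta+\chi$, $\chi\in X(T)$'' while you unpack the same fact as a permutation of the $H_i$ up to a translation by $\xi_\sigma\in X(T)$ that is absorbed into $X(T)$. The specialization to $\sigma=-\id$ for $\Hscr$ and the transfer to the cell decompositions are exactly the ``clear'' remaining claims of the paper.
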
 
\begin{proof} The first claim follows from Lemma \ref{eq:deltainvariance} and the fact that $\tilde{\Hscr}$ consists of the supporting hyperplanes for $-\Delta+\chi$ for $\chi\in X(T)$.
The rest of the claims are clear.
\end{proof}
\subsection{Adjoints}
We have
\begin{equation*}
\label{eq:identification}
\begin{aligned}
\Escr_C&\cong D(\Lambda^\circ_C),\\
\Escr^c_C&\cong \Perf(\Lambda^\circ_C),
\end{aligned}
\end{equation*}
where
$
\Lambda_C=\End_{X/G}(P_C).
$
\begin{theorem}\protect{\cite[Theorem 1.6.1]{SVdB}}
\label{th:global}
$\Lambda_C$ has finite global dimension.
\end{theorem}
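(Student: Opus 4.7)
The plan is to reduce directly to \cite[Theorem 1.6.1]{SVdB}, so the main task is to match the setup. First I would unpack definitions: the algebra $\Lambda_C=\End_{X/G}(P_C)$ with $P_C=\bigoplus_{\chi\in \Lscr_C}V(\chi)\otimes_k\Oscr_X$ and $\Lscr_C=(\xi_C+\Delta)\cap X(T)^+$ is precisely the endomorphism algebra of a ``$\delta$-window bundle'' in the sense of \cite[\S1.6]{SVdB} with $\delta=\xi_C$. The quasi-symmetry of $W$ and the finiteness of the generic $T$-stabilizer---the two standing hypotheses of the cited theorem---are in force throughout \S\ref{sec:SKMSqs}, and Lemma \ref{lem:lema33} already guarantees that $\Lscr_C$, hence $\Lambda_C$, does not depend on the choice $\xi_C\in C$, so the statement is well-posed.

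With the translation of notation in place, the assertion $\gldim \Lambda_C<\infty$ is an immediate application of \cite[Theorem 1.6.1]{SVdB}. For orientation: that proof constructs, for every finitely generated $\Lambda_C$-module, a finite projective resolution by direct summands of $\Lambda_C$. The resolution is produced by combining the Kempf--Ness/HKKN stratification of the unstable locus of $X$ with a Borel--Weil--Bott analysis of the Koszul complexes arising along the strata. The reason the quasi-symmetric hypothesis enters is geometric: it forces the zonotope $\overline{\Sigma}$ to be centrally symmetric, so that $\Delta_0=(1/2)\overline{\Sigma}$ has exactly the minimal width at which all the differentials of these stratum Koszul complexes carry weights that still lie inside the window $\Lscr_C$.

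The one concern specific to our setting---which I would address explicitly---is that $\xi_C$ need not be in general position: for non-maximal $C$ it lies on one or more walls of $\Hscr$. However, \cite[Theorem 1.6.1]{SVdB} imposes no genericity assumption on $\delta$, and for non-generic $\delta$ the window $\Lscr_C$ is merely larger than for a nearby generic cell (by Lemma \ref{lem:lema33}(\ref{it:2})), which can only enlarge the class of modules that are classically generated by $P_C$ and therefore only helps. Consequently no supplementary argument is required here, and the main technical obstacle---controlling the weights produced by the HKKN Koszul complexes against the window $\Lscr_C$---is entirely internal to the cited paper.
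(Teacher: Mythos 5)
Your proposal is correct and matches the paper exactly: Theorem \ref{th:global} is stated in the paper purely as a citation of \cite[Theorem 1.6.1]{SVdB} with no further argument, and your reduction---identifying $\Lambda_C=\End_{X/G}(P_C)$ with the endomorphism algebra of the window bundle of loc.\ cit.\ for $\delta=\xi_C$, under the standing quasi-symmetry and finite generic $T$-stabilizer hypotheses---is precisely the intended justification. Your added remark that no genericity of $\xi_C$ is needed is accurate and harmless.
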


\begin{lemma} \label{lem:adjoints} $\delta_{CC'}$ has  a right 
denoted
by $\gamma_{C'C}$ 
which sends $\Escr_{C'}^c$ to $\Escr_{C}^c$. We have
\begin{equation}
\label{eq:gamma}
\gamma_{C'C}=\RHom_{X/G}(P_C,-)\otimes_{\End_{X/G}(P_C)} P_C.
\end{equation}
\end{lemma}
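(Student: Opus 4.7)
The plan is to produce $\gamma_{C'C}$ as a tilting-type functor using $P_C$ as a compact generator of $\Escr_C$, and then verify the two assertions separately. First observe that $\Lscr_C=\Delta_C\cap X(T)^+$ is finite, since $\Delta_C$ is a bounded polytope and $X(T)^+$ is discrete; so $P_C=\bigoplus_{\chi\in\Lscr_C}P_\chi$ is a finite direct sum of $G$-equivariant vector bundles on the affine variety $X$. By the conventions of \S\ref{sec:notation} such a bundle is a compact (perfect) object in $D(X/G)$, and being equivariantly projective it satisfies $\RHom_{X/G}(P_C,P_C)=\Lambda_C$ concentrated in degree zero. Since $\Escr_C=\langle P_C\rangle$ by definition, standard tilting theory yields a triangulated equivalence
\[
R:=\RHom_{X/G}(P_C,-):\Escr_C \xrightarrow{\sim} D(\Lambda_C^\circ),\qquad L:=(-)\otimes_{\Lambda_C} P_C,
\]
with $L$ (derived in the obvious way) quasi-inverse to $R$; moreover $R$ identifies $\Escr_C^c$ with $\Perf(\Lambda_C^\circ)$.

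I next set $\gamma_{C'C}:=L\circ R$ on $\Escr_{C'}$, which is precisely the stated formula. The natural evaluation gives a candidate counit $\gamma_{C'C}(M)\to M$ inside $\Escr_{C'}\subset D(X/G)$; to verify that this exhibits $\gamma_{C'C}$ as right adjoint to $\delta_{CC'}$ it suffices, for each $M\in\Escr_{C'}$, to show that the induced natural transformation
\[
\Hom_{\Escr_C}(N,\gamma_{C'C}(M))\to \Hom_{D(X/G)}(N,M),\qquad N\in\Escr_C,
\]
is an isomorphism. Both sides are cohomological functors of $N$ that send coproducts to products; on the compact generator $N=P_C$ they both reduce to $\Hom_{D(X/G)}(P_C,M)$ (via $R$ on the left). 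The usual Brown-representability argument for the compactly generated category $\Escr_C$ then propagates this isomorphism to all $N\in\Escr_C$.

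Finally, for preservation of compactness, $\Escr_{C'}^c$ is the thick closure of the compact generator $P_{C'}$ and $\gamma_{C'C}$ is triangulated, so it suffices to verify $\gamma_{C'C}(P_{C'})\in\Escr_C^c$. Under $R$ this amounts to showing $\Hom_{X/G}(P_C,P_{C'})\in\Perf(\Lambda_C^\circ)$. This module decomposes as a finite sum of pieces $\Hom_{X/G}(P_\chi,P_\eta)=(V(\chi)^\ast\otimes V(\eta)\otimes k[X])^G$, each finitely generated over $k[X]^G$; hence $\Hom(P_C,P_{C'})$ is a finitely generated $\Lambda_C^\circ$-module concentrated in degree zero. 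The substantive input is then Theorem \ref{th:global}: the finite global dimension of $\Lambda_C$ implies that every finitely generated $\Lambda_C^\circ$-module is perfect. The only real obstacle is this invocation of Theorem \ref{th:global}; the remaining checks are routine tilting-theoretic bookkeeping.
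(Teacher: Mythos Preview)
Your proof is correct and follows essentially the same route as the paper's own argument: identify $\Escr_C$ with $D(\Lambda_C^\circ)$ via the tilting equivalence given by $P_C$, read off the adjoint formula, and then check preservation of compactness on the generator $P_{C'}$ (equivalently on each $P_\chi$, $\chi\in\Lscr_{C'}$) by invoking Theorem~\ref{th:global}. The paper compresses all of this into two sentences, but the content is the same.
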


\begin{proof}
  It is easy to see that \eqref{eq:gamma} is the right adjoint to
  $\delta_{CC'}$. When we evaluate it on $P_\chi$ for $\chi\in C'$ then we see 
by Theorem \ref{th:global} that the image lies in $\Escr^c_C$.
\end{proof}

\subsection{Duality}
Below we use the autoduality functor 
\[\Dscr:D^b(\coh(X/G))\r D^b(\coh(X/G))^\circ,\]
\[
\Dscr=\uRHom_{X/G}(-,\Oscr_X).
\]
Obviously $\Dscr\circ \Dscr\cong\Id$.
\begin{lemma}
\label{lem:duality}
We have $
\Dscr(\Escr^c_C)= \Escr^c_{-C}$.
\end{lemma}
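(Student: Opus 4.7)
My plan is to use the natural duality on $P_\chi$ together with the $\Wscr$-invariance of the cells in $\Cscr$.

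First, I would compute $\Dscr$ on the generators. Since $P_\chi = V(\chi)\otimes_k \Oscr_X$ is a direct summand of a finite free $\Oscr_X$-module (for $V(\chi)$ finite dimensional), we have
\[
\Dscr(P_\chi) = \uRHom_{X/G}(V(\chi)\otimes_k \Oscr_X, \Oscr_X) \cong V(\chi)^\ast \otimes_k \Oscr_X = V(-w_0\chi)\otimes_k\Oscr_X = P_{-w_0\chi}.
\]
In particular $\Dscr$ sends each $P_\chi$ (for $\chi\in X(T)^+$) to another object of the same form.

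Next I would check that $-w_0$ induces a bijection $\Lscr_C \xrightarrow{\cong}\Lscr_{-C}$. Here the key point is that $C\subseteq X(T)^\Wscr_\RR$, and $\Wscr$ acts trivially on $X(T)^\Wscr_\RR$, so for $\xi_C\in C$ we have $-w_0\xi_C = -\xi_C \in -C$; thus $-\xi_C$ is a legitimate choice for $\xi_{-C}$. Combining this with Lemma \ref{eq:deltainvariance} (applied to $\sigma=-w_0$, which gives $-w_0\Delta = \Delta$) and the fact that $-w_0$ preserves the dominant cone $X(T)^+$, we obtain
\[
-w_0(\Lscr_C) = -w_0\bigl((\xi_C+\Delta)\cap X(T)^+\bigr) = (-\xi_C+\Delta)\cap X(T)^+ = \Lscr_{-C},
\]
using that the choice of basepoint in $-C$ is irrelevant by Lemma \ref{lem:lema33}.

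Combining the two observations, $\Dscr(P_C) = \bigoplus_{\chi\in\Lscr_C} P_{-w_0\chi}$ is, up to reindexing, exactly $P_{-C}$. Now $\Dscr$ restricts to an exact (contravariant) involutive autoequivalence of the category of perfect complexes on $X/G$, hence sends thick subcategories to thick subcategories. Since $\Escr^c_C$ is classically generated by the summands of $P_C$ (by the convention on $\langle\Sscr\rangle^c$ recalled in \S\ref{sec:notation}) and $\Dscr$ sends these generators bijectively to a classical generating set of $\Escr^c_{-C}$, we conclude $\Dscr(\Escr^c_C)=\Escr^c_{-C}$. The only step that requires any thought is the bijection $-w_0:\Lscr_C\to\Lscr_{-C}$, and it reduces to the two facts just noted ($w_0$ acts trivially on $X(T)^\Wscr_\RR$ and $-w_0\Delta=\Delta$), so there is no real obstacle here.
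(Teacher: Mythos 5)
Your proposal is correct and follows essentially the same route as the paper: compute $\Dscr(P_\chi)=P_{-w_0\chi}$ and use Lemmas \ref{eq:deltainvariance} and \ref{rem:Hinvariance} (i.e.\ $-w_0\Delta=\Delta$ and $-w_0\xi_C=-\xi_C\in -C$) to see that $-w_0$ carries $\Lscr_C$ onto $\Lscr_{-C}$. The only cosmetic difference is that you make the bijection on generating sets (and hence the equality, rather than just one inclusion) explicit, whereas the paper leaves the reverse inclusion to the involutivity of $\Dscr$.
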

\begin{proof} Let $\chi \in (\deltaC_C+\Delta)\cap X(T)^+$. Then $\Dscr(P_\chi)=P_{-w_0\chi}$. 
By Lemmas \ref{eq:deltainvariance},\ref{rem:Hinvariance}  we have 
$-w_0\chi\in -\deltaC_C+\Delta=\deltaC_{-C}+\Delta=\Delta_{-C}$ and hence $-w_0\chi\in \Lscr_{-C}$.
\end{proof}

\medskip

Besides the objects $P_\chi$ in $D(X/G)$ introduced in \S\ref{sec:mainth} we will also be interested in the objects\footnote{See \S\ref{sec:notation} for our use of $\Ind^G_B(-)$.} 
$
\RInd^G_B(\chi\otimes \Oscr_{X^{\lambda,+}})
$ 
for $\chi \in X(T)^+$, $\lambda\in Y(T)^-_{\RR}$. 
\begin{lemma}\cite[Proof of Lemma 11.2.1]{SVdB} \label{lem:workhorse} \label{lem:weyman1}
The cohomologies  
of $\RInd^G_B(\chi\otimes \Oscr_{X^{\lambda,+}})$ are (as $G$-representations) direct sums of $V(\mu)$ with
$\langle \lambda,\mu\rangle\le \langle \lambda,\chi\rangle$.
\end{lemma}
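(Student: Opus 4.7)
The plan is to filter $\chi\otimes\Oscr_{X^{\lambda,+}}$ as a $B$-module so that computing $\RInd^G_B$ reduces to the case of one-dimensional $B$-modules, and then apply Borel--Weil--Bott together with a simple Weyl-group inequality driven by the antidominance of $\lambda$.

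First I would pin down the weights of the coefficient ring. Since $K_\lambda$ is spanned by the weight vectors $w_j$ with $\langle\lambda,\beta_j\rangle>0$, the $T$-weights $\tau$ appearing in $\Sym(W/K_\lambda)$ are nonnegative integer combinations of those $\beta_j$ with $\langle\lambda,\beta_j\rangle\le 0$, so $\langle\lambda,\tau\rangle\le 0$. Hence every $T$-weight $\sigma$ occurring in $\chi\otimes\Sym(W/K_\lambda)$ satisfies $\langle\lambda,\sigma\rangle\le\langle\lambda,\chi\rangle$.

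Next I would filter $\chi\otimes\Sym(W/K_\lambda)$ by symmetric degree, and then filter each finite-dimensional graded piece by a flag of $B$-submodules whose one-dimensional graded quotients are $T$-weight spaces $k_\sigma$ (each $\sigma$ satisfying the bound above). Applying $\RInd^G_B$ yields a convergent spectral sequence whose $E_1$-page is a sum of $\RInd^G_B(k_\sigma)=H^\bullet(G/B,\Lscr_\sigma)$, so every $V(\mu)$ appearing in the cohomology of $\RInd^G_B(\chi\otimes\Oscr_{X^{\lambda,+}})$ is an irreducible summand of some $H^\bullet(G/B,\Lscr_\sigma)$. Borel--Weil--Bott then forces $\mu=w\ast\sigma$ dominant for some $w\in\Wscr$, and the problem is reduced to proving the key inequality $\langle\lambda,\mu\rangle\le\langle\lambda,\sigma\rangle$.

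For this key step, setting $\nu=w(\sigma+\overline{\rho})=\mu+\overline{\rho}$ (which is dominant since $\mu$ is), a direct manipulation gives
\[
\langle\lambda,\mu\rangle-\langle\lambda,\sigma\rangle=\langle(w^{-1}-1)\lambda,\sigma+\overline{\rho}\rangle=\langle\lambda-w\lambda,\nu\rangle.
\]
Because $-\lambda$ is a dominant coweight, the standard fact that $\mu'-w\mu'$ is a nonnegative combination of positive coroots for any dominant coweight $\mu'$, applied to $\mu'=-\lambda$, shows that $\lambda-w\lambda$ is a nonpositive combination of positive coroots; pairing with the dominant weight $\nu$ gives $\le 0$. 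Chaining the inequalities yields $\langle\lambda,\mu\rangle\le\langle\lambda,\sigma\rangle\le\langle\lambda,\chi\rangle$.

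I expect the main obstacle to be the final Weyl-group identity: one must keep track of the shifted action $w\ast(-)$ together with the convention in the paper that the roots of $B$ are the negative ones (so that $V(\chi)=\Ind_B^G\chi$). Once the signs are arranged correctly, the inequality is essentially forced by the dominance of $\nu$ and antidominance of $\lambda$.
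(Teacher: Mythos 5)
Your proof is correct and follows essentially the same route as the source the paper cites for this lemma (the paper itself offers no argument beyond the reference to \cite[proof of Lemma 11.2.1]{SVdB}): bound $\langle\lambda,-\rangle$ on the weights of $\chi\otimes\Sym(W/K_\lambda)$, filter by one-dimensional $B$-modules, apply Borel--Weil--Bott, and conclude with the antidominance inequality. Your final Weyl-group step is exactly the inequality $\langle\lambda,\nu\rangle\le\langle w\lambda,\nu\rangle$ for $\lambda$ antidominant and $\nu$ dominant, which is \cite[Corollary D.3]{SVdB} and is also invoked in the proof of Lemma \ref{lem:weyl}.
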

\begin{lemma}\cite[Theorems (5.1.2), (5.1.4), \S5.4]{weyman2003cohomology}
Let $\beta_\lambda=\sum_{\la\lambda,\beta_i\ra>0}\beta_i$ and let $d_\lambda=|\{i\mid \la\lambda,\beta_i\ra>0\}|-\dim(G/B)$. Then
\begin{equation*}\label{eq:dual0}
\Dscr(\RInd_{B}^G(\chi\otimes \Oscr_{X^{\lambda,+}}))=
\RInd_{B}^G\left((-2\overline{\rho}-\chi-\beta_\lambda)\otimes \Oscr_{X^{\lambda,+}}\right)[-d_\lambda].
\end{equation*}
\end{lemma}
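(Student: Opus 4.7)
The plan is to factor $\RInd_B^G$ as the composition of a closed embedding and a smooth proper projection, and then apply Grothendieck duality twice. Concretely, write the functor as $Rp_\ast\circ j_\ast$, where $j\colon X^{\lambda,+}/B\hookrightarrow X/B$ is the $B$-equivariant closed embedding (this makes sense because the anti-dominance of $\lambda$, combined with the fact that the roots of $B$ are negative, forces $K_\lambda$ to be $B$-stable) and $p\colon X/B\to X/G$ is the smooth proper $G/B$-fibration.

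First I would apply Grothendieck duality to the closed embedding $j$. The codimension is $c_\lambda:=|\{i\mid\la\lambda,\beta_i\ra>0\}|$, and the conormal bundle is the trivial vector bundle $K_\lambda\otimes\Oscr_{X^{\lambda,+}/B}$, whose determinant corresponds to the $T$-character $\beta_\lambda$. Hence $j^!\Oscr_{X/B}\cong(-\beta_\lambda)\otimes\Oscr_{X^{\lambda,+}/B}[-c_\lambda]$ and standard duality yields
\[
\Dscr_{X/B}(j_\ast(\chi\otimes\Oscr_{X^{\lambda,+}}))\cong j_\ast((-\chi-\beta_\lambda)\otimes\Oscr_{X^{\lambda,+}})[-c_\lambda].
\]

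Next I would apply Grothendieck duality to the smooth proper projection $p$. Its relative dualizing complex is pulled back from $\omega_{G/B}$; since the tangent space $T_{eB}(G/B)=\mathfrak{g}/\mathfrak{b}$ has $T$-character $2\overline{\rho}$ (as $B$ has the negative roots), the line bundle $\omega_{G/B}$ corresponds to the $B$-character $-2\overline{\rho}$, giving $p^!\Oscr_{X/G}\cong(-2\overline{\rho})\otimes\Oscr_{X/B}[\dim G/B]$. Composing this with the previous step introduces an extra twist by $-2\overline{\rho}$ and an extra shift of $[\dim G/B]$, yielding
\[
\Dscr(\RInd_B^G(\chi\otimes\Oscr_{X^{\lambda,+}}))\cong \RInd_B^G((-2\overline{\rho}-\chi-\beta_\lambda)\otimes\Oscr_{X^{\lambda,+}})[\dim G/B-c_\lambda],
\]
which is the stated formula once one notes $d_\lambda=c_\lambda-\dim G/B$.

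The only real obstacle is bookkeeping: one must carefully keep track of the $B$-equivariant structures, in particular the twists by $-2\overline{\rho}$ coming from $\omega_{G/B}$ and by $-\beta_\lambda$ coming from $\det N_{X^{\lambda,+}/X}$, and verify that Grothendieck duality is available in the equivariant setting on the quotient stacks $X/B$ and $X/G$. In our reductive, smooth affine-base situation none of these points is essentially difficult.
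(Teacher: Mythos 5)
The paper gives no proof of this lemma; it is quoted directly from Weyman, and your two-step Grothendieck-duality computation (closed embedding $j$ of codimension $c_\lambda$ contributing $(-\beta_\lambda)[-c_\lambda]$, then the proper smooth $G/B$-fibration $p$ contributing $(-2\overline{\rho})[\dim G/B]$) is correct and is essentially the argument underlying the cited theorems. Your observation that $K_\lambda$ is $B$-stable precisely because $\lambda$ is anti-dominant and the roots of $B$ are negative is the one hypothesis worth recording explicitly, since the lemma as stated suppresses the standing assumption $\lambda\in Y(T)^-_{\RR}$.
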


\medskip

For $C<C'\in \Cscr$ we put
\[
\Escr_{C,C'}=\Escr^\perp_{C'}\cap \Escr_C.
\]
For specific choices of $\deltaC_C\in C$, $\deltaC_{C'}\in C'$ we put  $\varepsilon=\deltaC_{C'}-\deltaC_{C}$.  
In the notation of Corollary \ref{cor:maincor} with $\Pi=\Delta_C$ we have
$\Escr=\Escr_C$, $\Escr_\varepsilon=\Escr_{C'}$ 
(using Lemma \ref{lem:lema33}) and hence by \eqref{eq:sod}, $\Escr_{C,C'}=\overline{\Escr}_\varepsilon$.
Concretely we obtain from Corollary \ref{cor:maincor}:
\begin{multline*}
\Escr_{C,C'}=\left\la \RInd^G_B(\chi\otimes \Oscr_{X^{\lambda,+}})\mid \chi\in \Lscr_{C}\setminus \Lscr_{C'}, \lambda\in Y(T)^-_\RR\right.\\
\left.\text{ such that }\langle \lambda,\zeta\rangle\ge\langle \lambda,\chi\rangle {\text  { for }}\zeta\in \Delta_C, \langle \lambda,\varepsilon\rangle>0\vphantom{\RInd^G_B}\right\ra.
\end{multline*}
In particular $\Escr_{C,C'}$ is generated by compact objects in $D(X/G)$.

\medskip 

The next lemma elucidates how $\Escr_{C,C'}$ interacts
with duality.
\begin{lemma}\label{lem:ddual} Let $C_1,C,C_2$ be collinear faces in $\Cscr$ such that 
$C<C_1,C_2$ ($C_1$ and $C_2$ determine each other). Then 
\[
\Dscr(\Escr^c_{C,C_1})= \Escr^c_{-C,-C_2}.
\]
\end{lemma}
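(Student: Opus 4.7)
The plan is to verify the inclusion $\Dscr(\Escr^c_{C, C_1}) \subseteq \Escr^c_{-C, -C_2}$ on a set of compact generators, and then deduce equality using $\Dscr^2 \cong \Id$ together with the observation that the triple $(-C_2, -C, -C_1)$ is again collinear (with $-C$ between $-C_2$ and $-C_1$), so the same argument applied to it yields $\Dscr(\Escr^c_{-C, -C_2}) \subseteq \Escr^c_{C, C_1}$; combining these two inclusions gives equality.

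To establish $\Dscr(\Escr^c_{C, C_1}) \subseteq \Escr^c_{-C, -C_2}$, I would take a typical generator $F = \RInd^G_B(\chi \otimes \Oscr_{X^{\lambda, +}})$ of $\Escr^c_{C, C_1}$ from the displayed description preceding the lemma, so $\chi \in \Lscr_C \setminus \Lscr_{C_1}$, $\lambda \in Y(T)^-_\RR$, $\langle \lambda, \zeta\rangle \geq \langle \lambda, \chi\rangle$ for $\zeta \in \Delta_C$, and $\langle \lambda, \varepsilon_1\rangle > 0$ with $\varepsilon_1 := \deltaC_{C_1} - \deltaC_C$. The duality formula in the displayed lemma immediately above gives
\[
\Dscr(F) = \RInd^G_B\bigl((-2\overline{\rho} - \chi - \beta_\lambda) \otimes \Oscr_{X^{\lambda, +}}\bigr)[-d_\lambda],
\]
with both $X^{\lambda, +}$ and $\lambda$ unchanged, so $\lambda$ is the natural candidate for the antidominant parameter of a $(-C, -C_2)$-generator. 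The cocharacter positivity is immediate: collinearity of $(C_1, C, C_2)$ with $C$ between $C_1$ and $C_2$ forces $\deltaC_{C_2} - \deltaC_C$ to be a negative scalar multiple of $\varepsilon_1$, and with the convention $\deltaC_{-C'} := -\deltaC_{C'}$ the vector $\varepsilon_2' := \deltaC_{-C_2} - \deltaC_{-C}$ is a positive multiple of $\varepsilon_1$, whence $\langle \lambda, \varepsilon_2'\rangle > 0$.

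For the weight condition, a direct computation (using only the central symmetry of $\Delta_0$, which follows from $W$ being unimodular) gives $\min_{\delta \in \Delta}\langle \lambda, \delta\rangle = -\langle \lambda, \overline{\rho} + \tfrac{1}{2}\beta_\lambda\rangle$, so the extremality of $\chi$ on $\Delta_C$ forces the identity $\langle \lambda, \chi + \overline{\rho} + \tfrac{1}{2}\beta_\lambda\rangle = \langle \lambda, \deltaC_C\rangle$. Setting $\chi'' := -2\overline{\rho} - \chi - \beta_\lambda$ and using $\deltaC_{-C} = -\deltaC_C$, one obtains $\langle \lambda, \chi'' + \overline{\rho} + \tfrac{1}{2}\beta_\lambda\rangle = \langle \lambda, \deltaC_{-C}\rangle$, which is precisely the analogous $\lambda$-extremality condition on $\Delta_{-C}$.

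The main obstacle is that $\chi''$ is not dominant in general, so $\Dscr(F)$ is not literally a standard (dominant-weight) generator of $\Escr^c_{-C, -C_2}$. I would resolve this by applying Bott--Borel--Weil weight-by-weight to the $B$-module $\chi'' \otimes \Oscr_{X^{\lambda, +}}$, using that $X^{\lambda, +}$ is $B$-stable for antidominant $\lambda$: each $T$-weight $\chi'' + \nu$ appearing in $\Oscr_{X^{\lambda, +}}$ is either $\ast$-singular (and contributes nothing to $\RInd^G_B$) or has a unique dominant $\ast$-representative $w \ast (\chi'' + \nu)$ in the induced $G$-module. The combinatorial verification that the resulting dominant weights land in $\Lscr_{-C} \setminus \Lscr_{-C_2}$ (and not merely in $\Lscr_{-C}$) uses the $\pm\Wscr$-invariance of $\tilde{\Hscr}$ from Lemma \ref{rem:Hinvariance}, the fact that the $\Wscr$-action on $\Delta$ is by translation inside $X(T)$ (Lemma \ref{eq:deltainvariance}), and the $\lambda$-extremality identity just established; this combinatorial step is the technical heart of the proof.
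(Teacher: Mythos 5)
Your overall strategy is the same as the paper's: reduce to one inclusion using $\Dscr^2\cong\Id$ and the symmetry $(C_1,C,C_2)\mapsto(-C_2,-C,-C_1)$, apply Weyman's duality formula to a generator, and check that the resulting data defines a generator of $\Escr^c_{-C,-C_2}$; your verification of the positivity $\la\lambda,\deltaC_{-C_2}-\deltaC_{-C}\ra>0$ also matches the paper's normalization. The gap is in the weight condition. You verify only the extremality identity $\la\lambda,\chi''\ra=\min_{\zeta\in\Delta_{-C}}\la\lambda,\zeta\ra$ for $\chi'':=-2\overline{\rho}-\chi-\beta_\lambda$, but this does not show $\chi''\in\Delta_{-C}$: a point can realize the minimal pairing value with $\lambda$ without lying in the polytope. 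Central symmetry of $\Delta_0$ handles $-2\overline{\rho}-\chi$, but the additional shift by $-\beta_\lambda$ is precisely what must be controlled. The paper does this via the explicit description of closed faces of the zonotope (Lemma \ref{lem:closure}): since $\chi$ lies on the closed $\lambda$-minimal face of $\Delta_C$, one writes $\chi=\deltaC_C-\overline{\rho}-\tfrac12\beta_\lambda+\sum_{\la\lambda,\beta_i\ra=0}c_i\beta_i$ with $c_i\in[-1/2,0]$, and quasi-symmetry applied to the lines contained in $\ker\lambda$ allows one to rewrite $-\sum_{\la\lambda,\beta_i\ra=0}c_i\beta_i$ with coefficients again in $[-1/2,0]$, placing $\chi''$ on the closed $\lambda$-minimal face of $\Delta_{-C}$. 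This use of quasi-symmetry (not merely unimodularity/central symmetry) is the actual content of the lemma and is absent from your argument.

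Your treatment of the dominance issue is also both heavier than necessary and left unexecuted. The paper never needs $\chi''$ dominant: it proves the set identity $\{-2\overline{\rho}-\chi-\beta_\lambda\mid\chi\in\overline{F}_{C,\lambda}\}=\overline{F}_{-C,\lambda}$ and works with generators indexed by all lattice points of the closed face, so no Bott--Borel--Weil detour is required. As proposed, applying Bott--Borel--Weil ``weight-by-weight'' to $\chi''\otimes\Oscr_{X^{\lambda,+}}$ would at best describe the $G$-isotypic decomposition of the cohomology of $\RInd^G_B(\chi''\otimes\Oscr_{X^{\lambda,+}})$; it does not by itself yield a generation statement in $D(X/G)$, and you explicitly defer ``the technical heart'' (that the resulting weights lie in $\Lscr_{-C}\setminus\Lscr_{-C_2}$) without carrying it out. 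Note that the exclusion from $\Lscr_{-C_2}$ is cheap once membership and extremality are established: $\chi''\in\Delta_{-C_2}=\varepsilon+\Delta_{-C}$ would give $\la\lambda,\chi''-\varepsilon\ra\ge\la\lambda,\chi''\ra$, contradicting $\la\lambda,\varepsilon\ra>0$.
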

\begin{proof}
We use the notation of Corollary \ref{cor:maincor}, but to indicate the context we write $F_{C,\lambda}=F_\lambda$. 
We may assume $\deltaC_{C_2}+\deltaC_{C_1}=2\deltaC_C$.
Set 
\[
\varepsilon=\deltaC_{C_1}-\deltaC_{C}=-\delta_{C_2}+\deltaC_{C}=\deltaC_{-C_2}-\deltaC_{-C}.
\]
By definition, $\Escr_{C,C_1}$ (resp. $\Escr_{-C,-C_2}$) is generated by
$\RInd_{B}^G(\chi\otimes \Oscr_{X^{\lambda,+}})$ for 
$\chi\in \overline{F}_{C,\lambda}$ (resp. $\chi\in \overline{F}_{-C,\lambda}$) with $\lambda\in Y(T)_\RR^-$,  $\la \lambda,\varepsilon \ra>0$. 
By  Lemma \ref{lem:weyman1}, we therefore need to check that for $\lambda\in Y(T)_\RR^-$ with $\la\lambda,\varepsilon \ra>0$ we have
\[
\{-2\overline{\rho}-\chi-\beta_\lambda\mid \chi\in \overline{F}_{C,\lambda}\}=\{\chi\in \overline{F}_{-C,\lambda}\}.
\]
By duality, it is enough to prove one inclusion. 
Assume $\chi\in \overline{F}_{C,\lambda}$. 
Using \eqref{eq:closure1} below we can write 
\[
\chi=\deltaC_C-\overline{\rho}-(1/2)\beta_\lambda+\sum_{\la\lambda,\beta_i\ra=0}c_i\beta_i
\]
with $c_i\in [-1/2,0]$. Hence 
\begin{align*}
-2\overline{\rho}-\chi-\beta_\lambda&=-\deltaC_C-\overline{\rho}-(1/2)\beta_\lambda-\sum_{\la\lambda,\beta_i\ra=0}c_i\beta_i\\
&=\deltaC_{-C}-\overline{\rho}-(1/2)\beta_\lambda+ \sum_{\la\lambda,\beta_i\ra=0}c'_i\beta_i
\end{align*}
for some $c'_i\in [-1/2,0]$ using quasi-symmetry. By Lemma \ref{lem:closure} below, $-2\overline{\rho}-\chi-\beta_\lambda\in \overline{F}_{-C,\lambda}$ as desired.
\end{proof}

\subsection{Mutation spherical pairs}
\begin{proposition}\label{prop:mutation} Let $C_1,C,C_2$ be collinear faces in $\Cscr$ such that 
$C<C_1,C_2$.
There are semi-orthogonal decompositions 
\[
\Escr_C=\langle \Escr_{C,C_1},\Escr_{C_1}\rangle=\langle \Escr_{C_1},\Escr_{C,C_2}\rangle=\langle \Escr_{C,C_2},\Escr_{C_2}\rangle=\langle \Escr_{C_2},\Escr_{C,C_1}\rangle,
\]
\[
\Escr^c_C=\langle \Escr^c_{C,C_1},\Escr^c_{C_1}\rangle=\langle \Escr^c_{C_1},\Escr^c_{C,C_2}\rangle=\langle \Escr^c_{C,C_2},\Escr^c_{C_2}\rangle=\langle \Escr^c_{C_2},\Escr^c_{C,C_1}\rangle.
\] 
In other words, $\Escr_{C_1},\Escr_{C_2}\subseteq \Escr_C$,  $\Escr^c_{C_1},\Escr^c_{C_2}\subseteq \Escr^c_C$ are mutation spherical pairs. 
\end{proposition}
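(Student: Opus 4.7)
The plan is to establish two of the four semi-orthogonal decompositions directly from the admissibility of $\Escr_{C_i}\subseteq\Escr_C$, and to obtain the other two by applying the autoduality $\Dscr$ at the level of compact objects, using Lemma \ref{lem:ddual} to carry out the identification $\Dscr\Escr^c_{-C,-C_1}=\Escr^c_{C,C_2}$.

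For the first half, Lemma \ref{lem:adjoints} provides a right adjoint $\gamma_{CC_i}$ to the inclusion $\delta_{C_iC}:\Escr_{C_i}\hookrightarrow\Escr_C$ which preserves compacts, so $\Escr_{C_i}$ is right-admissible in $\Escr_C$ and standard mutation theory yields
\[
\Escr_C=\langle\Escr_{C_i}^\perp\cap\Escr_C,\,\Escr_{C_i}\rangle=\langle\Escr_{C,C_i},\,\Escr_{C_i}\rangle
\]
(the second equality by the definition of $\Escr_{C,C_i}$), which restricts to the analogous SOD of compact subcategories since $\gamma_{CC_i}$ preserves compacts. The same argument, applied to the negated triple $(-C_1,-C,-C_2)$---still collinear with $-C<-C_1,-C_2$ by Lemma \ref{rem:Hinvariance}---yields $\Escr^c_{-C}=\langle\Escr^c_{-C,-C_1},\Escr^c_{-C_1}\rangle$.

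Applying the duality $\Dscr$, which is a contravariant equivalence and therefore reverses the order of semi-orthogonal decompositions, and invoking Lemma \ref{lem:duality} for $\Dscr\Escr^c_{-C}=\Escr^c_C$ and $\Dscr\Escr^c_{-C_1}=\Escr^c_{C_1}$, together with Lemma \ref{lem:ddual} applied to the negated triple for $\Dscr\Escr^c_{-C,-C_1}=\Escr^c_{C,C_2}$, we obtain
\[
\Escr^c_C=\langle\Escr^c_{C_1},\,\Escr^c_{C,C_2}\rangle;
\]
the symmetric exchange $C_1\leftrightarrow C_2$ in the hypothesis then produces the remaining SOD $\Escr^c_C=\langle\Escr^c_{C_2},\Escr^c_{C,C_1}\rangle$. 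These compact SODs lift to the big categories because all subcategories involved are compactly generated localizing subcategories of $\Escr_C$ (for $\Escr_{C,C_i}$ via the explicit compact generators displayed just before Lemma \ref{lem:ddual}): Brown representability supplies the needed adjoints, and right-orthogonality passes from compacts to their coproduct closures since $\Hom(K,-)$ commutes with coproducts for compact $K$. The one non-formal step in the whole argument is the duality identification of Lemma \ref{lem:ddual} pairing $-C_1$ with $C_2$ rather than $C_1$, which rests on the quasi-symmetry of $W$ through the relation $-2\overline{\rho}-\chi-\beta_\lambda\in\overline{F}_{-C,\lambda}$.
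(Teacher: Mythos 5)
Your proof is correct and follows essentially the same route as the paper: one semi-orthogonal decomposition obtained directly (the paper cites Corollary \ref{cor:maincor}, you derive it formally from the right adjoint of Lemma \ref{lem:adjoints}, which is equivalent given that the identification of $\Escr_{C_1}^\perp\cap\Escr_C$ with the explicitly generated category, needed for Lemma \ref{lem:ddual}, is supplied by Corollary \ref{cor:maincor} via the display before that lemma), then the remaining three by applying $\Dscr$ to the negated triple together with the exchange $C_1\leftrightarrow C_2$, and finally passing between the compact and big categories. The only cosmetic difference is that you work at the compact level throughout and lift at the end, whereas the paper states the first decomposition for the big categories first.
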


\begin{proof}
In our current notation
Corollary \ref{cor:maincor} implies that there is a semi-orthogonal decomposition
\[
\Escr_C=\langle \Escr_{C,C_1},\Escr_{C_1}\rangle.
\]
By inspecting the proof, there is a semi-orthogonal decomposition
\[
\Escr^c_C=\langle \Escr^c_{C,C_1},\Escr^c_{C_1}\rangle.
\]
By using duality in Lemmas \ref{lem:duality}, \ref{lem:ddual} (replacing $C$ by $-C$, $C_1$ by $-C_1$, $C_2$ by $-C_2$), and by  interchanging $C_1$ and $C_2$  one respectively obtains 
three more semi-orthogonal decompositions
\[
\Escr^c_C=\langle \Escr^c_{C_1},\Escr^c_{C,C_2}\rangle=\langle \Escr^c_{C,C_2},\Escr^c_{C_2}\rangle=\langle \Escr^c_{C_2},\Escr^c_{C,C_1}\rangle.
\] 
From these one easily obtains corresponding semi-orthogonal decompositions with the $(-)^c$ omitted:
\[
\Escr_C=\langle \Escr_{C_1},\Escr_{C,C_2}\rangle=\langle \Escr_{C,C_2},\Escr_{C_2}\rangle=\langle \Escr_{C_2},\Escr_{C,C_1}\rangle.\qedhere
\]
\end{proof}
\subsection{Proof of Proposition \ref{prop:schober}}
The $X(T)^\Wscr$-equivariance is obvious.
The first part of Definition \ref{def:schober} is satisfied by the definition of $\delta_{C'C}$, $\gamma_{CC'}$.  We need to verify the conditions $(M),(I),(T)$. 
\begin{enumerate}
\item[(M)] This follows from the fact that $(\delta_{CC'},\gamma_{C'C})$ is
an adjoint pair and $\delta_{CC'}$ is fully faithful.
\item[(I)]\label{I} 
Put $C=C_1\wedge C_2$. The property is a consequence of Proposition \ref{prop:mutation}, which implies that $\phi_{C_1C_2}$ is the mutation functor. 
\item[(T)]
Since $C_1,C_2,C_3$ are collinear we may assume
\begin{equation}
\label{eq:interval}
\xi_{C_2}\in [\xi_{C_1},\xi_{C_3}].
\end{equation} 
The interval $[\xi_{C_1},\xi_{C_3}]$  passes through different faces of $\Cscr$ (all sharing $C_0$)
and by considering those we reduce formally to the case where $C_1$, $C_2$ are \emph{neighboring}, i.e. $C_1<C_2$ or $C_2<C_1$. So below assume
we are in this situation. 

We claim that in case $C_1<C_2$ we have
\begin{equation}
\label{perp}
\Escr_{C_1,C_2}\subset \Escr_{C_3}^\perp.
\end{equation}
Let $\RInd^G_B(\chi\otimes \Oscr_{X^{\lambda,+}})$ be a defining generator of $\Escr_{C_1,C_2}$. Then 
$\la \lambda,-\ra \ge\la\lambda,\chi\ra$ on $\Delta_{C_1}$ and $\langle \lambda,\xi_{C_2}-\xi_{C_1}\rangle>0$. By \eqref{eq:interval} the latter implies $\langle \lambda,\xi_{C_3}-\xi_{C_1}\rangle>0$. 
Let $P_\mu$ be a defining generator of $\Escr_{C_3}$. Then $\mu-\xi_{C_3}+\xi_{C_1}\in \Delta_{C_1}$ and hence $\la \lambda,\mu-\xi_{C_3}+\xi_{C_1}\ra \ge\la\lambda,\chi\ra$. It follows
$\la \lambda ,\mu\ra >\la \lambda,\chi\ra$. We finish the proof of the claim by invoking Lemma \ref{lem:weyman1}.

\medskip

Now it remains to prove that the canonical natural transformation 
\[
\phi_{C_2C_3}\phi_{C_1C_2}\r \phi_{C_1C_3}
\] is an isomorphism and it is sufficient do this after evaluation on
a generator~$P_{\chi}$ with $\chi \in \Lscr_{C_1}$. In other words we have to prove
\begin{equation}
\label{eq:pchi}
\gamma_{C_0C_3}\gamma_{C_0C_2}(P_\chi)=\gamma_{C_0C_3}(P_\chi)
\end{equation}
(we have not written the $\delta$'s). If $C_2<C_1$ then $\Escr_{C_1}\subset \Escr_{C_2}$ and
hence $\gamma_{C_0C_2}(P_\chi)=P_\chi$ and so there is nothing prove. 
So assume $C_1<C_2$. In that case  $\gamma_{C_0C_2}P_\chi=\gamma_{C_1C_2}P_\chi$ and by Proposition \ref{prop:mutation}
\begin{equation}\label{eq:ec1c2}
\cone(\gamma_{C_1C_2}P_\chi\r P_\chi)\in  \Escr_{C_1,C_2}.
\end{equation}
Then \eqref{eq:pchi} follows by applying $\gamma_{C_0C_3}$ to \eqref{eq:ec1c2} 
and invoking \eqref{perp}.
\end{enumerate}

(1) is a restatement of Proposition \ref{prop:mutation}. 
(2) follows from the main statement of the proposition (and Proposition \ref{prop:mutation}). 
\qedhere

\appendix
\section{Explicit semi-orthogonal decompositions}
The purpose of this appendix is to give a self-contained exposition on the results\footnote{In \cite{SVdB3} we are mainly concerned with producing semi-orthogonal decompositions
of $D(X/G)$ whose main part is a non-commutative resolution of $X\quot G$. Such a non-commutative resolution may be of the form $D(X^{ss,\chi}/G)$ but it does not have to be.
In particular in the linear case the theory in loc.\ cit.\ does not depend on the existence of non-trivial characters
to produce non-trivial results. So it also applies when $G$ is semi-simple.}
 from \cite{SVdB3} which were used above, and are somewhat dispersed in loc.cit. 

\subsection{Faces and fans}
\label{sec:someresults}
Let $E$ be a finite dimensional real vector space and let $\Pi$ be a full dimensional
polyhedron in $E$.
Let us say that $\lambda\in E^\ast$ defines a supporting half plane for a face $F$ of $\Pi$ if
there is a supporting half space for $F$ of the form $\{x\mid \langle\lambda,x\rangle\ge u\}$, i.e.
$\langle \lambda,f\rangle\ge u$ for $f\in \Pi$ with equality if and only if $f\in \overline{F}$.
Clearly $F$ and $u$ are determined by~$\lambda$ and we write $F_\lambda=F$, $u_\lambda=u$. For $f\in \Pi$ we put
\[
\sigma_f=\{\lambda\in E^\ast\mid f\in F_\lambda\}.
\]
This is an open polyhedral cone with unique vertex $0$.
For use below we note
\begin{equation}
\label{eq:barsigma}
\overline{\sigma}_f=\{\lambda\in E^\ast\mid f\in \overline{F}_\lambda\}.
\end{equation}
It is easy to see that $\sigma_f$ only depends on the face $F$ that $f$ belongs to. We write $\sigma_F=\sigma_f$.
\begin{proposition}\cite[Theorem 2.3.2, Propositions 2.3.8, 2.3.7]{CoxLittleSchenck}
\label{prop:CLS}
\mbox{}
\begin{enumerate}
\item
$\Sigma_\Pi:=(\overline{\sigma_{F}})_F$ is a  fan\footnote{$\Sigma_\Pi$ is called the ``normal fan of $\Pi$''.} in $E^\ast$ such that $\coprod_F\sigma_F=E^\ast$.
\item Let $\Fscr(\Delta)$ be the set of (open) faces of $\Delta$, ordered by $F\le F'$ iff $F\subset \overline{F'}$. Then the map
$F\mapsto \overline{\sigma_{F}}$ is an order inverting isomorphism between $\Fscr(\Pi)$
and $\Sigma_{\Pi}$.
\item
The function is $\lambda\mapsto u_\lambda$ is continuous and piecewise linear on $\Sigma_\Pi$.
\end{enumerate}{}
\end{proposition}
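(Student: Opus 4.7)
The plan is to reduce everything to the explicit description of the normal cones via facet normals. Fix a presentation $\Pi=\{x\in E:\langle \alpha_i,x\rangle\ge b_i,\,i\in I\}$ with $I$ finite (possible since $\Pi$ is polyhedral), and for each face $F$ let $I(F)=\{i\in I:\langle\alpha_i,f\rangle=b_i\text{ for all }f\in \overline{F}\}$ denote the set of inequalities that are active on $F$. The tautology $\overline{F}=\{x\in\Pi:\langle\alpha_i,x\rangle=b_i\text{ for }i\in I(F)\}$ will be used throughout.

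The first and main step is to identify
\[
\overline{\sigma}_F=\cone(\alpha_i:i\in I(F))\qquad\text{and}\qquad\sigma_F=\relint\overline{\sigma}_F.
\]
The inclusion $\supseteq$ is immediate: if $\lambda=\sum_{i\in I(F)}c_i\alpha_i$ with $c_i\ge 0$, then for any $x\in\Pi$ one has $\langle\lambda,x\rangle\ge\sum_{i\in I(F)}c_i b_i=\langle \lambda,f\rangle$ for $f\in \overline{F}$, with equality precisely on $\overline{F}$ when all $c_i>0$. The reverse inclusion is an instance of Farkas' lemma applied to the system $\{\langle\alpha_i,x\rangle\ge b_i\}$: if $\lambda$ attains its minimum on $\overline{F}$, then $\lambda$ must lie in the nonnegative cone spanned by the active constraint normals $\alpha_i$, $i\in I(F)$.

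From this identification, parts (1) and (2) follow formally. Polyhedrality of the cones $\overline{\sigma}_F$ is built in, and $\coprod_F\sigma_F=E^*$ expresses the elementary fact that every linear functional on $E$ attains its minimum on $\Pi$ along a unique relatively open face (using that $\Pi$ is treated as a face of itself, corresponding to $\lambda=0$). For the order-reversing bijection $F\leftrightarrow \overline{\sigma}_F$: the relation $F\le F'$ means $F\subseteq\overline{F'}$, which in terms of active sets reads $I(F')\subseteq I(F)$, yielding $\overline{\sigma}_{F'}\subseteq\overline{\sigma}_F$. For the fan axiom one checks that $\overline{\sigma}_F\cap\overline{\sigma}_{F'}=\overline{\sigma}_G$ where $G$ is the unique smallest face of $\Pi$ with $F,F'\le G$, thereby realizing the intersection as a common face of both cones.

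Part (3) is then easy: on each closed cone $\overline{\sigma}_F$ one has $u_\lambda=\langle \lambda,f\rangle$ for any fixed $f\in F$, which is well-defined (since $\langle \lambda,\cdot\rangle$ is constant on $\overline{F}$ for $\lambda\in\overline{\sigma}_F$) and linear in $\lambda$; continuity across adjacent cones is automatic because both expressions agree with the common value $\min_{x\in\Pi}\langle\lambda,x\rangle$. The hard part will be the reverse inclusion in the identification of $\overline{\sigma}_F$, which requires Farkas' lemma applied uniformly across all faces; once this is in hand, the remaining assertions reduce to bookkeeping with active constraint sets and the combinatorics of the face lattice of $\Pi$.
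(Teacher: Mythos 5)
The paper gives no proof of this proposition at all: it is quoted verbatim from Cox--Little--Schenck (Theorem 2.3.2, Propositions 2.3.7, 2.3.8), so there is nothing internal to compare against. Your argument is the standard one and is essentially what the cited reference contains: identify $\overline{\sigma}_F$ with the cone on the active facet normals via linear-programming duality (Farkas), and read off the fan axioms, the order-reversing bijection, and the piecewise linearity of $\lambda\mapsto u_\lambda$ from the combinatorics of active sets. The steps you leave as ``bookkeeping'' (that $\overline{F}$ is cut out inside $\Pi$ by its full active set, that $\{\sum_{i\in I(F)}c_i\alpha_i: c_i>0\}$ is the relative interior of the cone, and that $\overline{\sigma}_{F\vee F'}$ is a face of each of $\overline{\sigma}_F$, $\overline{\sigma}_{F'}$ --- e.g.\ by cutting $\overline{\sigma}_F$ with the supporting hyperplane $\langle -,g-f\rangle=0$ for $g\in F\vee F'$, $f\in F$) are all standard and fillable. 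One caveat: the completeness statement $\coprod_F\sigma_F=E^\ast$ requires every $\lambda$ to attain its minimum on $\Pi$, i.e.\ that $\Pi$ be bounded; the paper says ``polyhedron'' but in its application $\Pi=\deltaC-\overline{\rho}+\Delta_0$ is a polytope, so your implicit boundedness assumption is harmless but should be stated.
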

Now assume that $E$ is equipped with a positive definite inner product $(-,-)$.
This induces an identification $E\cong E^*$ and a positive definite inner
product on $E^\ast$, also denoted by $(-,-)$.

We fix $0\neq \varepsilon\in X(T)_{\RR}$. For $0\neq \lambda\in E^\ast$ put
\[
q(\lambda)=\frac{\langle\lambda,\varepsilon\rangle}{\|\lambda\|}.
\]
We put 
\[
H_\varepsilon=\{\lambda\in E^\ast\mid \langle \lambda,\varepsilon\rangle >0\}
\]
and set
\[
\tau_{f}=\sigma_f\cap H_\varepsilon.
\]
$\tau_f$ is an (open) polyhedral cone in $E^\ast$
 if it is non-empty. We have
\begin{equation*}
\label{eq:tauempty}
\tau_f\neq\emptyset \iff \forall \kappa>0:f\not\in\kappa\varepsilon+\Pi.
\end{equation*}
If $\tau_f\neq \emptyset$ then
it is easy to see (see Corollary \ref{cor:maximum} below) that $q{\mid} (\overline{\tau}_f-\{0\})$
takes its maximum values on a unique ray $\RR_{>0}\lambda_f$. We put
$q_f=q(\lambda_f)$. If $\tau_f=\emptyset$ then we put $q_f=-\infty$.
\subsection{Main results}
We recall that in \cite[\S11.2, Proof of Lemma 11.2.1]{SVdB} certain bounded complexes 
$C_{\lambda,\chi}$ where constructed for $\chi\in X(T)^+$, $\lambda\in Y(T)^-_{\RR}$, 
 computing 
$\RInd^G_B(\chi\otimes \Oscr_{X^{\lambda,+}})$. 

The terms in $C_{\lambda,\chi}$ are of the form $P_\zeta$
 with
\begin{equation}
\label{eq:sigmalist}
\zeta=(\chi+\beta_{i_1}+\cdots+\beta_{i_{p}})^+
\end{equation}
where $\{i_1,\ldots,i_{p}\}\subset \{1,\ldots,d\}$, $d=\dim W$, $i_j\neq i_{j'}$ for $j\neq j'$, $\la \lambda,\beta_{i_j}\ra>0$. In particular
$P_{\chi}$ occurs once and the canonical morphism 
\[P_\chi=\RInd^G_B(\chi \otimes \Oscr_X)\r \RInd^G_B(\chi\otimes \Oscr_{X^{\lambda,+}})\]
is represented by a morphism of complexes 
$P_\chi\r C_{\lambda,\chi}$
whose cone is in $\langle (P_{\zeta})_{\zeta,p\neq 0}\rangle$ where $\zeta$ is as in \eqref{eq:sigmalist}. 

The following is our main combinatorial result about the complexes $C_{\lambda,\chi}$.
\begin{proposition} \label{prop:mainprop} Assume that $W$ is quasi-symmetric and the generic $T$-stabilizer is finite (so that $\Pi$ is full dimensional).
Put
  $\Pi=\deltaC-\overline{\rho}+\Delta_0\subset X(T)_\RR:=E$ with
  $\deltaC\in X(T)_\RR^\Wscr$ and fix 
  $0\neq\varepsilon\in X(T)^\Wscr_{\RR}$ as above. Choose $(-,-)$ to be $\Wscr$-invariant.
 Let
  $\chi\in \Pi\cap X(T)^+$ be such that
  $\forall\kappa>0:\chi\not\in \kappa\varepsilon +\Pi$
  (i.e. $q_\chi\neq-\infty$). 
\begin{enumerate} 
\item \label{it:mainprop1} We have $\lambda_\chi\in
  Y(T)_\RR^-$.
\item \label{it:mainprop2}
For every $\lambda\in Y(T)_\RR^-\cap \overline{\sigma}_\chi$ 
the terms $P_\zeta$,  occurring in $C_{\lambda,\chi}$ (see \eqref{eq:sigmalist}) satisfy $\zeta\in \Pi$.
\item \label{it:mainprop3}
Moreover if $\lambda=\lambda_\chi$  and $\zeta\neq \chi$ 
then those terms satisfy in addition $q_\zeta<q_\chi$.
\end{enumerate}
\end{proposition}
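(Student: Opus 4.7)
All three parts rest on the observation that the shifted polyhedron $\widetilde\Pi := \Pi + \overline{\rho} = \xi + \Delta_0$ is $\Wscr$-invariant (since both $\xi$ and $\Delta_0$ are) and on the strict dominance of $\widetilde\chi := \chi + \overline{\rho}$. Parts (1) and (2) follow essentially from this, while part (3), the strict inequality $q_\zeta < q_\chi$, is the main obstacle and requires a careful comparison of the normal cones $\overline{\sigma}_\chi$ and $\overline{\sigma}_\zeta$ against $\varepsilon$.

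\textbf{Part (1).} Translation invariance of normal cones gives $\overline{\sigma}_\chi(\Pi) = \overline{\sigma}_{\widetilde\chi}(\widetilde\Pi)$. For any $\lambda$ in this common cone and any simple positive root $\alpha$, the $\Wscr$-invariance of $\widetilde\Pi$ ensures $s_\alpha\widetilde\chi \in \widetilde\Pi$, whence
\[
0 \le \langle \lambda,\, s_\alpha\widetilde\chi - \widetilde\chi\rangle = -\langle \alpha^\vee,\widetilde\chi\rangle\langle\lambda,\alpha\rangle.
\]
Since $\langle\alpha^\vee,\widetilde\chi\rangle \ge 1$, this forces $\langle\lambda,\alpha\rangle \le 0$, so $\overline{\sigma}_\chi \subseteq Y(T)^-_\RR$; in particular $\lambda_\chi \in Y(T)^-_\RR$.

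\textbf{Part (2).} Writing $\zeta + \overline{\rho} = w(\chi + \sum_j\beta_{i_j} + \overline{\rho})$ and using $\Wscr$-invariance of $\widetilde\Pi$, the claim $\zeta \in \Pi$ reduces to $\chi + \sum_j\beta_{i_j} \in \Pi$. From $\chi\in\overline{F}_\lambda$ one has, as derived in the proof of Lemma \ref{lem:ddual},
\[
\chi = \xi - \overline{\rho} - \tfrac{1}{2}\beta_\lambda + \sum_{\langle\lambda,\beta_i\rangle = 0} c_i\beta_i, \qquad c_i\in[-1/2,0],
\]
so it suffices to express $-\tfrac{1}{2}\beta_\lambda + \sum_j\beta_{i_j} + \sum c_i\beta_i$ as $\sum_i a_i\beta_i$ with $a_i\in[-1/2,0]$. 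I will do this line by line on the lines $\ell\ni 0$ in $X(T)_\RR$ containing weights: on each such line the quasi-symmetry relation $\sum_{\beta_i\in\ell}\beta_i = 0$ provides the needed lineality, and the distinctness of the indices $i_j$ gives the inequality $M \le N$ (where $M$ is the total positive contribution from the $i_j$'s on $\ell$ and $N$ the total positive contribution on $\ell$) that controls the rewriting.

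\textbf{Part (3): the main obstacle.} The plan is to show that for each term $\zeta\ne\chi$ one has $\lambda_\chi\notin\overline{\sigma}_\zeta$, and then to upgrade this to the strict inequality $q_\zeta < q_\chi$ via a separating-hyperplane argument. When the Weyl element $w$ in $\zeta + \overline{\rho} = w(\chi + \sum_j\beta_{i_j} + \overline{\rho})$ is trivial, $\zeta = \chi + \sum_j\beta_{i_j}$ and $\langle\lambda_\chi,\zeta-\chi\rangle = \sum\langle\lambda_\chi,\beta_{i_j}\rangle > 0$, directly giving $\lambda_\chi\notin\overline{\sigma}_\zeta$. When $w\ne e$, the standard inequality $\langle\lambda_\chi, v\rangle \le \langle\lambda_\chi, w'v\rangle$ for $\lambda_\chi$ antidominant (by (1)) and $v = \zeta+\overline{\rho}$ dominant bounds $\langle\lambda_\chi,\zeta\rangle$ from above, while $\zeta\in\Pi$ (from (2)) bounds it from below by $\langle\lambda_\chi,\chi\rangle$. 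The delicate borderline is the case of equality $\langle\lambda_\chi,\zeta\rangle = \langle\lambda_\chi,\chi\rangle$ (i.e., $\zeta\in\overline{F}_{\lambda_\chi}$), which I plan to exclude by combining the explicit form $\zeta = (\chi+\sum_j\beta_{i_j})^+$ with the uniqueness of the ray $\RR_{>0}\lambda_\chi$ realizing the maximum of $q$ on $\overline{\tau}_\chi$, forcing $\zeta = \chi$. Once $\lambda_\chi\notin\overline{\sigma}_\zeta$ is secured, the separating-hyperplane argument uses $\langle\mu,\zeta-\chi\rangle\le 0$ for all $\mu\in\overline{\sigma}_\zeta$ (because $\chi\in\Pi$ and $\mu$ attains its minimum on $\Pi$ at $\zeta$) together with $\langle\lambda_\chi,\zeta-\chi\rangle > 0$ and the uniqueness of the argmax ray to conclude $q_\zeta < q_\chi$; making this geometric separation argument rigorous, especially handling the $w\ne e$ case, is the main technical challenge I anticipate.
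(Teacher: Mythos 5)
Parts (1) and (2) of your outline are sound. For (1) you in fact prove the stronger statement $\overline{\sigma}_\chi\subseteq Y(T)^-_\RR$ directly from the strict dominance of $\chi+\overline{\rho}$ and the $\Wscr$-invariance of $\deltaC+\Delta_0$; the paper obtains the conclusion only for $\lambda_\chi$, via its Lemma \ref{lem:weyl} and the $\Wscr$-invariance of $\varepsilon$, but your route is a legitimate shortcut. Your line-by-line rewriting for (2) is exactly the computation inside the paper's Lemma \ref{lem:technical} (the assertion $f'\in\overline{\Sigma}$), and it works: on each line $\ell$ quasi-symmetry makes the attainable interval of $\ell$-components of points of $\overline{\Sigma}$ symmetric about $0$, which is precisely what absorbs the shift by the $\beta_{i_j}$.

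The gap is in (3), exactly where you anticipate trouble, and the route you sketch does not close it. Knowing $\lambda_\chi\notin\overline{\sigma}_\zeta$ (equivalently $\langle\lambda_\chi,\zeta\rangle>\langle\lambda_\chi,\chi\rangle$) together with $\langle\mu,\zeta-\chi\rangle\le 0$ for all $\mu\in\overline{\sigma}_\zeta$ gives no upper bound on the maximum of $q$ over $\overline{\tau}_\zeta\cap\SS$: the cones $\overline{\tau}_\chi$ and $\overline{\tau}_\zeta$ need not be comparable in the face ordering, and the fact that the maximizer of $q$ over the first cone avoids the second says nothing about how large $q$ can be on the second. (If $\varepsilon$ is nearly orthogonal to $\zeta-\chi$, the half-space $\langle\cdot,\zeta-\chi\rangle\le 0$ contains directions on which $q$ is arbitrarily close to its global maximum $\|\varepsilon\|$.) What actually forces the strict inequality --- and this is the heart of the paper's Lemma \ref{lem:technical} --- is a perturbation argument: after removing the Weyl twist using $q_{\mu^+}=q_\mu$, one assumes $q(\lambda_{\zeta_0})\ge q(\lambda_\chi)$ for $\zeta_0=\chi+\sum_j\beta_{i_j}$, sets $\lambda_t=(1-t)\lambda_\chi+t\lambda_{\zeta_0}$, and shows $\lambda_t\in\overline{\tau}_\chi$ for $0<t\ll 1$, which contradicts the maximality of $\lambda_\chi$ because $q$ is strictly concave along chords of the unit sphere. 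The membership $\lambda_t\in\overline{\tau}_\chi$ is verified with the zonotope face criterion of Lemma \ref{lem:closure}: for weights with $\langle\lambda_\chi,\beta_i\rangle\neq 0$ continuity suffices, while for weights with $\langle\lambda_\chi,\beta_i\rangle=0$ one needs the identity $c'_i=c_i$ between the zonotope coordinates of $\chi$ and $\zeta_0$ --- again a consequence of quasi-symmetry --- combined with $\zeta_0\in\overline{F}_{\lambda_{\zeta_0}}$. None of this appears in your sketch, and without it the conclusion $q_\zeta<q_\chi$ does not follow.
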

For the benefit of the reader we give a self-contained proof of this combinatorial proposition in \S\ref{sec:selfcontained} below.
The ensuing corollary below may be deduced
from the results in \cite{SVdB3}   
and also from the  results in
\cite{HL,HLSam}. See Remarks \ref{rem:HLS},\ref{rem:sod} below.
\begin{corollary} \label{cor:maincor} Assume that $W$ is quasi-symmetric and the generic $T$-stabilizer is finite.
Let $\Pi$ be as in Proposition \ref{prop:mainprop}.
  Put
  $\Pi_\varepsilon=\bigcup_{\kappa>0} \Pi\cap
  (\kappa\varepsilon+\Pi)$.
  Put $\Lscr=\Pi\cap X(T)^+$, $\Lscr_\varepsilon=\Pi_{\varepsilon}\cap X(T)^+$ and
  let $\Escr=\langle P_\zeta\rangle_{\zeta\in \Lscr} $ and $\Escr_\varepsilon=\langle
P_\zeta\rangle_{\zeta \in \Lscr_\varepsilon}$. Let $\overline{\Escr}_\varepsilon$ be
the full subcategory of $D(X/G)$ spanned by $\RInd^G_B(\mu\otimes \Oscr_{X^{\lambda,+}})$
for $\mu\in \Lscr\setminus \Lscr_\varepsilon$ and 
$\lambda\in \overline{\sigma}_\mu\cap Y(T)^-_\RR$, $\langle\lambda,\varepsilon\rangle>0$. Then there is a semi-orthogonal decomposition
\begin{equation}
\label{eq:sod}
\Escr=\langle \overline{\Escr}_\varepsilon,\Escr_\varepsilon\rangle.
\end{equation}
\end{corollary}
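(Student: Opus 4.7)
The plan is to verify the three ingredients of a semi-orthogonal decomposition: (a) both $\overline{\Escr}_\varepsilon$ and $\Escr_\varepsilon$ are contained in $\Escr$; (b) the semi-orthogonality $\Hom(\Escr_\varepsilon,\overline{\Escr}_\varepsilon)=0$; and (c) together they generate $\Escr$. Containment of $\Escr_\varepsilon$ in $\Escr$ is immediate from $\Lscr_\varepsilon\subseteq\Lscr$. Containment of $\overline{\Escr}_\varepsilon$ in $\Escr$ follows because each generator $\RInd^G_B(\mu\otimes\Oscr_{X^{\lambda,+}})$ is computed by the complex $C_{\lambda,\mu}$, whose terms are $P_\zeta$ with $\zeta\in\Pi\cap X(T)^+=\Lscr$ by Proposition~\ref{prop:mainprop}\eqref{it:mainprop2}.

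For the semi-orthogonality I would show directly that $\Hom^\ast_{X/G}(P_\chi,\RInd^G_B(\mu\otimes\Oscr_{X^{\lambda,+}}))=0$ for every generator $P_\chi$ of $\Escr_\varepsilon$ ($\chi\in\Lscr_\varepsilon$) and every generator of $\overline{\Escr}_\varepsilon$ (with $\mu\in\Lscr\setminus\Lscr_\varepsilon$, $\lambda\in\overline{\sigma}_\mu\cap Y(T)^-_\RR$, $\langle\lambda,\varepsilon\rangle>0$). By Lemma~\ref{lem:weyman1} it suffices to show $\langle\lambda,\chi\rangle>\langle\lambda,\mu\rangle$. Since $\chi\in\Lscr_\varepsilon$ there exists $\kappa>0$ with $\chi-\kappa\varepsilon\in\Pi$, while $\lambda\in\overline{\sigma}_\mu$ means (via \eqref{eq:barsigma}) that $\mu\in\overline{F}_\lambda$, so $\langle\lambda,\mu\rangle=u_\lambda=\min_{f\in\Pi}\langle\lambda,f\rangle$. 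Hence
\[
\langle\lambda,\chi\rangle=\langle\lambda,\chi-\kappa\varepsilon\rangle+\kappa\langle\lambda,\varepsilon\rangle\ge\langle\lambda,\mu\rangle+\kappa\langle\lambda,\varepsilon\rangle>\langle\lambda,\mu\rangle,
\]
using $\kappa>0$ and $\langle\lambda,\varepsilon\rangle>0$.

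For the generation I would show that every $P_\chi$ with $\chi\in\Lscr$ lies in $\langle\overline{\Escr}_\varepsilon,\Escr_\varepsilon\rangle$, by descending induction on $q_\chi\in\RR\cup\{-\infty\}$. When $\chi\in\Lscr_\varepsilon$ we have $q_\chi=-\infty$ and $P_\chi\in\Escr_\varepsilon$ directly. Otherwise $q_\chi$ is finite, and we choose $\lambda:=\lambda_\chi$. Proposition~\ref{prop:mainprop}\eqref{it:mainprop1} gives $\lambda\in Y(T)^-_\RR$; by definition $\lambda\in\overline{\tau}_\chi\subseteq\overline{\sigma}_\chi\cap\overline{H}_\varepsilon$, and $q(\lambda)=q_\chi>0$ forces $\langle\lambda,\varepsilon\rangle>0$, so $\RInd^G_B(\chi\otimes\Oscr_{X^{\lambda,+}})\in\overline{\Escr}_\varepsilon$. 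The canonical morphism $P_\chi\to\RInd^G_B(\chi\otimes\Oscr_{X^{\lambda,+}})$ has cone in $\langle P_\zeta\mid \zeta\text{ as in }\eqref{eq:sigmalist}, p\neq 0\rangle$, and Proposition~\ref{prop:mainprop}\eqref{it:mainprop2},\eqref{it:mainprop3} guarantee that each such $\zeta$ lies in $\Lscr$ and satisfies $q_\zeta<q_\chi$; the inductive hypothesis places these $P_\zeta$ in $\langle\overline{\Escr}_\varepsilon,\Escr_\varepsilon\rangle$, and a triangle argument does the rest.

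The main point requiring some care is termination of the induction: because the generators of $\Escr$ are parametrized by the possibly infinite set $\Lscr$, one must justify that induction on $q_\chi$ is well-founded. This follows from Proposition~\ref{prop:CLS}: the value $q_\chi$ depends only on the cone $\overline{\sigma}_\chi$ in the normal fan $\Sigma_\Pi$, and $\Sigma_\Pi$ is finite, so $q$ assumes only finitely many values on $\Lscr$. Everything else is formal manipulation of the finitely many new $\zeta$'s produced at each induction step, together with the triangle in $D(X/G)$ coming from $P_\chi\to C_{\lambda_\chi,\chi}$.
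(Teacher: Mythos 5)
Your proof is correct and follows essentially the same route as the paper: containment via Proposition~\ref{prop:mainprop}\eqref{it:mainprop2}, orthogonality via Lemma~\ref{lem:weyman1} together with the inequality $\langle\lambda,\chi\rangle>\langle\lambda,\chi-\kappa\varepsilon\rangle\ge\langle\lambda,\mu\rangle$, and generation by iterating Proposition~\ref{prop:mainprop}(\ref{it:mainprop1},\ref{it:mainprop3}). The only difference is that you spell out the generation step as a well-founded descending induction on $q_\chi$ (justified by finiteness of the normal fan), a detail the paper leaves implicit under ``repeatedly applying''.
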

\begin{proof} The fact that
  $\overline{\Escr}_\varepsilon\subset \Escr$ follows from Proposition
  \ref{prop:mainprop}\eqref{it:mainprop2} (using the above fact that $C_{\lambda,\chi}$ computes $\RInd^G_B(\chi\otimes \Oscr_{X^{\lambda,+}})$).  The fact that $\Escr$ is
  generated by $\Escr_\varepsilon$ and $\overline{\Escr}_\varepsilon$
  follows by repeatedly applying Proposition
  \ref{prop:mainprop}(\ref{it:mainprop1},\ref{it:mainprop3}). So we
  have to prove orthogonality; i.e. for $\zeta\in \Lscr_\varepsilon$
  and $\mu\in \Lscr\setminus \Lscr_\varepsilon$,
  $\lambda\in \overline{\sigma}_\mu\cap Y(T)^-_\RR$, $\langle\lambda,\varepsilon\rangle>0$ we must have
\[
\Hom_{X/G}(P_\zeta,\RInd^G_B(\mu\otimes \Oscr_{X^{\lambda,+}}))=0.
\]
This follows from Lemma \ref{lem:workhorse}, using the fact that we
have $\langle\lambda,\zeta\rangle>\langle\lambda,\mu\rangle$
(by the definition of $\Pi_\varepsilon$ there is some $\kappa>0$ such that $\zeta-\kappa\epsilon\in \Pi$, hence $\la \lambda,\zeta\ra
>\la \lambda,\zeta-\kappa\epsilon\ra\ge \la \lambda,\mu\ra$).
\end{proof}
\begin{remark} We have stated Corollary \ref{cor:maincor} in a way that is independent
of the choice of $(-,-)$. However we could also have defined $\overline{\Escr}_\varepsilon$ as
the full subcategory of $D(X/G)$ generated by  $\RInd^G_B(\mu\otimes \Oscr_{X^{\lambda_\mu,+}})$ for
$\mu\in \Lscr\setminus \Lscr_\varepsilon$. In this way it is easy to see
that $\overline{\Escr}_\varepsilon$ can be further decomposed according to the value of $q_\mu$.
\end{remark}
\begin{remark} 
\label{rem:HLS} In \cite{HL} (see also \cite{BFK}) Halpern-Leistner constructs under very general conditions an (infinite) semi-orthogonal decomposition
of $D(X/G)$ for a linearized quotient stack $X/G$ in terms of \emph{windows} 
(a concept introduced in \cite{SegalDonovan}). Using the windows description of $\Pi$ in
\cite[Definition 2.5, Lemma 2.8, \S3.1]{HLSam}, and the fact that the inclusion $\Pi_\varepsilon\subset \Pi$ is
obtained by replacing some of the closed intervals describing $\Pi$ by half-open intervals,
 one may view \eqref{eq:sod} as realizing, using explicit generating objects, a fragment 
of that semi-orthogonal decomposition in the case
that $X$ is a quasi-symmetric representation.
\end{remark}
\begin{remark} \label{rem:sod}
An alternative way of proving \eqref{eq:sod} is to replace $\Pi$ by a slightly
scaled and translated version $\delta'-\overline{\rho}+r\Delta_0$ for $r>1$ and $\delta'=\delta+\kappa\varepsilon$, $0<\kappa\ll 1$
chosen in  such a way
that $\Lscr=(\delta'-\overline{\rho}+r\Delta_0)\cap X(T)$, $\Lscr_\varepsilon=(\delta'-\overline{\rho}+\Delta_0)\cap X(T)$.
Then one may invoke the results of \cite[\S8]{SVdB3}. But as said above, we have preferred to give
a self-contained proof here.
\end{remark}
\subsection{Faces of zonotopes}
In this section we use the notations introduced in \S\ref{sec:someresults} but
we now consider the case that $\Pi$ is a zonotope
\[
\Pi=\sum_{i=1}^d [v_i,w_i]
\]
and ${v_i},w_i\in E$. It is well known that in this case
the  $F_\lambda$ introduced in \S\ref{sec:someresults}
have the following concrete description \cite[Appendix B]{SVdB}\label{lem:oneparam}
\begin{equation*}
\label{eq:Flambdadef}
F_\lambda=\sum_{\la\lambda,{w_i}\rangle>\la\lambda,v_i\ra}v_i+\sum_{\la\lambda,{w_i}\rangle<\la \lambda, v_i\ra }w_i+
\sum_{\la\lambda,{v_i}\rangle=\la \lambda, w_i\ra}]a_i,b_i[{v_i},
\end{equation*}
so that in particular
\begin{equation}
\label{eq:ulambda}
u_\lambda=\sum_{\la\lambda,{w_i}\rangle>\la\lambda,v_i\ra}\la \lambda,v_i\ra+\sum_{\la\lambda,{w_i}\rangle<\la \lambda, v_i\ra }\la \lambda,w_i.\ra
\end{equation}
We also have the following convenient characterization of $\overline{F}_\lambda$ and $\overline{\sigma}_f$.
\begin{lemma}
\label{lem:closure}
Assume $f=\sum_i r_i$, $r_i\in [v_i,w_i]$. Let $\lambda\in E^\ast$.
Then $f\in \overline{F}_\lambda$ (or equivalently $\lambda\in \overline{\sigma}_f$ by \eqref{eq:barsigma})
if and only if
\begin{equation}
\label{eq:closure1}
\begin{aligned}
\langle \lambda,{w_i}\rangle>\langle \lambda,{v_i}\rangle&\Longrightarrow r_i=v_i,\\
\langle \lambda,{w_i}\rangle<\langle \lambda,{w_i}\rangle&\Longrightarrow r_i=w_i.
\end{aligned}
\end{equation}
\end{lemma}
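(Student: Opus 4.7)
The plan is to invoke the characterization of $\overline{F}_\lambda$ implicit in the definition of a supporting half plane, namely that $\overline{F}_\lambda$ is precisely the set of points of $\Pi$ on which the linear functional $\langle \lambda,-\rangle$ attains its minimum value $u_\lambda$. (This is just the statement that $\langle\lambda,f\rangle\ge u_\lambda$ on $\Pi$ with equality exactly on $\overline{F}_\lambda$, which is built into the paragraph preceding \eqref{eq:barsigma}.)

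Next I would exploit the Minkowski decomposition $\Pi=\sum_i[v_i,w_i]$: since the minimum of a linear functional on a Minkowski sum is the sum of the minima on the summands,
\[
u_\lambda=\sum_i m_i(\lambda),\qquad m_i(\lambda):=\min\{\langle\lambda,v_i\rangle,\langle\lambda,w_i\rangle\},
\]
which matches the formula \eqref{eq:ulambda}. Now given the fixed decomposition $f=\sum_i r_i$ with $r_i\in[v_i,w_i]$, each term satisfies $\langle\lambda,r_i\rangle\ge m_i(\lambda)$, so summing gives $\langle\lambda,f\rangle\ge u_\lambda$, with equality if and only if $\langle\lambda,r_i\rangle=m_i(\lambda)$ for every $i$.

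Finally, since the restriction of $\langle\lambda,-\rangle$ to the segment $[v_i,w_i]$ is affine, this pointwise equality forces $r_i=v_i$ when $\langle\lambda,w_i\rangle>\langle\lambda,v_i\rangle$ and $r_i=w_i$ when $\langle\lambda,w_i\rangle<\langle\lambda,v_i\rangle$ (with no constraint when $\langle\lambda,v_i\rangle=\langle\lambda,w_i\rangle$). This is exactly \eqref{eq:closure1}, so combining the three observations proves both implications simultaneously.

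There is no serious obstacle; the only point worth flagging is that the lemma is asserted for an \emph{arbitrary} decomposition $f=\sum_i r_i$, which need not be the one coming from the formula for $\overline{F}_\lambda$ that one would write down from $F_\lambda$ by taking closures. The argument above handles every such decomposition uniformly precisely because the factorization $u_\lambda=\sum_i m_i(\lambda)$ forces each summand to minimize individually whenever the total minimum is attained.
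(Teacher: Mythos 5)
Your argument is correct and is essentially the paper's own proof: both reduce membership in $\overline{F}_\lambda$ to the equality $\langle\lambda,f\rangle=u_\lambda$, use \eqref{eq:ulambda} to write $u_\lambda$ as the sum of the termwise minima, and observe that equality in $\sum_i\langle\lambda,r_i\rangle\ge u_\lambda$ forces each summand to minimize, which is \eqref{eq:closure1}. The only difference is cosmetic: you rederive \eqref{eq:ulambda} from the Minkowski-sum property, whereas the paper quotes it.
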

\begin{proof} By \eqref{eq:ulambda} we have $f\in \overline{F}_\lambda$ if and only if
\[
\langle \lambda,f\rangle
=\sum_{\la\lambda,{w_i}\rangle>\la\lambda,v_i\ra}\la \lambda,v_i\ra+\sum_{\la\lambda,{w_i}\rangle<\la \lambda, v_i\ra }\la \lambda,w_i\ra.
\]
On the other hand inspecting the inequality
\begin{equation*}
\langle \lambda,f\rangle=\sum_i \langle\lambda,{r_i}\rangle
\ge u_\lambda= \sum_{\la\lambda,{w_i}\rangle>\la\lambda,v_i\ra}\la \lambda,v_i\ra+\sum_{\la\lambda,{w_i}\rangle<\la \lambda, v_i\ra }\la \lambda,w_i\ra
\end{equation*}
we see that it is an equality if and only if
 \eqref{eq:closure1} is true.
\end{proof}
\subsection{Some convex geometry}
In this section we use the notations introduced in \S\ref{sec:someresults}. 
We remind
the reader of a trivial lemma. Let
$
\SS=\{\lambda\in E\mid \|\lambda\|=1\}
$.
\begin{lemma} Let $\lambda_1,\lambda_2\in \SS\cap H_\epsilon$, $\lambda_1\neq \lambda_2$. For $t\in ]0,1[$ put $\lambda_t=(1-t)\lambda_1+t\lambda_2$. Then
\begin{equation}
\label{eq:concave}
q(\lambda_t)=q\left(\frac{\lambda_t}{\|\lambda_t\|}\right)>(1-t)q(\lambda_1)+tq(\lambda_2).
\end{equation}
\end{lemma}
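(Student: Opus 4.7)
The plan is direct. First observe that $q$ is homogeneous of degree $0$: for any $c>0$, $q(c\lambda)=\langle c\lambda,\varepsilon\rangle/\|c\lambda\|=q(\lambda)$. This immediately gives the stated equality $q(\lambda_t)=q(\lambda_t/\|\lambda_t\|)$, and shows that the rescaling to the sphere is harmless.

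Next, since $\lambda_1,\lambda_2\in\SS$ we have $q(\lambda_i)=\langle\lambda_i,\varepsilon\rangle$, so by bilinearity
\[
(1-t)q(\lambda_1)+t\,q(\lambda_2)=\langle(1-t)\lambda_1+t\lambda_2,\varepsilon\rangle=\langle\lambda_t,\varepsilon\rangle.
\]
On the other hand $q(\lambda_t)=\langle\lambda_t,\varepsilon\rangle/\|\lambda_t\|$. Thus the inequality \eqref{eq:concave} reduces to showing
\[
\frac{\langle\lambda_t,\varepsilon\rangle}{\|\lambda_t\|}>\langle\lambda_t,\varepsilon\rangle,
\]
i.e. $\|\lambda_t\|<1$ combined with $\langle\lambda_t,\varepsilon\rangle>0$.

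For the positivity: $\langle\lambda_i,\varepsilon\rangle>0$ because $\lambda_i\in H_\varepsilon$, hence $\langle\lambda_t,\varepsilon\rangle=(1-t)\langle\lambda_1,\varepsilon\rangle+t\langle\lambda_2,\varepsilon\rangle>0$. For the norm bound, the Euclidean norm is strictly convex, so $\|\lambda_t\|\le(1-t)\|\lambda_1\|+t\|\lambda_2\|=1$ with equality only when $\lambda_1$ and $\lambda_2$ are positively proportional; being unit vectors this would force $\lambda_1=\lambda_2$, contradicting the hypothesis. Hence $\|\lambda_t\|<1$, which combined with $\langle\lambda_t,\varepsilon\rangle>0$ yields the strict inequality and completes the proof.

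There is essentially no obstacle here; the only subtlety is remembering to use the positivity of $\langle\lambda_t,\varepsilon\rangle$, which is where the hypothesis $\lambda_1,\lambda_2\in H_\varepsilon$ (rather than merely on the sphere) is used to turn $1/\|\lambda_t\|>1$ into a strict inequality after multiplication by $\langle\lambda_t,\varepsilon\rangle$.
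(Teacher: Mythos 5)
Your proof is correct and follows essentially the same route as the paper's: note $\|\lambda_t\|<1$ by strict convexity of the norm, and divide, using $\langle\lambda_t,\varepsilon\rangle>0$ to keep the inequality strict. You are in fact slightly more careful than the paper, which leaves the positivity of $\langle\lambda_t,\varepsilon\rangle$ implicit.
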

\begin{proof} We have $\|\lambda_t\|<1$. Hence
\[
q(\lambda_t)=\frac{\langle\lambda_t,\varepsilon\rangle}{\|\lambda_t\|}>\langle\lambda_t,\varepsilon\rangle=(1-t)\langle\lambda_1,\varepsilon\rangle
+t\langle\lambda_2,\varepsilon\rangle=(1-t)q(\lambda_1)+tq(\lambda_2).\qedhere
\]
\end{proof}
\begin{corollary} \label{cor:maximum} Assume $\tau_{f}\neq \emptyset$. Then
$q$ attains 
a unique maximum on $\overline{\tau}_{f}\cap \SS$. This maximum is strictly positive.
\end{corollary}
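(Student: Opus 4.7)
The plan is to obtain existence by compactness, strict positivity directly from the hypothesis $\tau_f\neq\emptyset$, and uniqueness from the strict concavity estimate \eqref{eq:concave} proved in the preceding lemma.

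First I would note that $\overline{\tau}_f$ is a closed convex cone in $E^\ast$: both $\sigma_f$ and $H_\varepsilon$ are convex cones, so $\tau_f=\sigma_f\cap H_\varepsilon$ is one, and these properties persist under closure. Consequently $\overline{\tau}_f\cap\SS$ is compact, and since $q$ restricts to the continuous linear function $\langle-,\varepsilon\rangle$ on $\SS$, it attains a maximum value $M$ there. Strict positivity then follows by picking any $\lambda\in\tau_f$: since $\lambda\in H_\varepsilon$ we have $q(\lambda)>0$, and because $q$ is invariant under scaling by $\RR_{>0}$, the normalized vector $\lambda/\|\lambda\|\in\overline{\tau}_f\cap\SS$ has the same positive value, forcing $M>0$.

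For uniqueness, I would suppose $\lambda_1\ne\lambda_2$ in $\overline{\tau}_f\cap\SS$ both attain $M$. Since $\|\lambda_i\|=1$ we get $\langle\lambda_i,\varepsilon\rangle=M>0$, so actually $\lambda_i\in\SS\cap H_\varepsilon$, which is precisely the hypothesis needed to invoke the preceding lemma. Applying \eqref{eq:concave} to $\lambda_t=(1-t)\lambda_1+t\lambda_2$ for any $t\in (0,1)$ yields $q(\lambda_t/\|\lambda_t\|)>(1-t)M+tM=M$, while convexity of $\overline{\tau}_f$ together with its cone property place $\lambda_t/\|\lambda_t\|\in\overline{\tau}_f\cap\SS$, contradicting the maximality of $M$.

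The only delicate point is verifying that any maximizer lies in the open half-space $H_\varepsilon$, not merely its closure, so that the strict concavity inequality \eqref{eq:concave} is actually applicable; this is exactly why strict positivity of $M$ must be established before attempting to prove uniqueness.
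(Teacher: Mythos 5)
Your proof is correct and follows essentially the same route as the paper: existence by compactness, positivity from $q>0$ on the nonempty $\tau_f$ together with scale-invariance, and uniqueness by applying the strict inequality \eqref{eq:concave} to two putative maximizers. Your extra check that a maximizer lies in the open half-space $H_\varepsilon$ (so the preceding lemma's hypotheses apply) is a detail the paper leaves implicit, and you handle it correctly.
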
 
\begin{proof} $\overline{\tau}_{f}\cap \SS$ is compact and $q$ is continuous so it has at least one maximum on $\overline{\tau}_f\cap \SS$.
Since $q>0$ on $\tau_{f}\neq \emptyset$ this maximum cannot be zero.
If there are two (global) maxima for $\lambda_1\neq \lambda_2\in \overline{\tau}_f\cap \SS$ then by
\eqref{eq:concave} $q((\lambda_1+\lambda_2)/\|\lambda_1+\lambda_2\|)>q(\lambda_1)=q(\lambda_2)$, which is a contradiction.
\end{proof}
As already mentioned above we 
write $\lambda_{f}$ (or $\lambda_F$ if $F$ is the face containing $f$) for the element of $\overline{\tau}_f\cap \SS$ where $q$ attains its maximum. 
By \eqref{eq:barsigma}
\begin{equation*}
\label{eq:note}
f\in \overline{F}_{{}\lambda_{f}}.
\end{equation*}
Since $q$ is
invariant under dilation, $\RR_{>0}\lambda_{f}$ is the half ray in $\overline{\tau}_{f}$ where $q$  takes its maximum
values. We also write 
\[
q_f=
\begin{cases}
q(\lambda_f)&\text{if $\tau_f\neq \emptyset$,}\\
-\infty&\text{otherwise.}
\end{cases}
\]
The following is our main technical result.
\begin{lemma} \label{lem:technical} Assume that $W$ is quasi-symmetric and the generic $T$-stabilizer is finite.
Let $E=X(T)_\RR$, $\Pi=\overline{\Sigma}$.
 Let $f\in\overline{\Sigma}$ be such that $\tau_{f}\neq \emptyset$.
Let $\emptyset\neq \{i_1,\ldots,i_p\}\subset [d]$ be such that $\forall j:\langle \lambda_f,\beta_{i_j}\rangle>0$.
Put $f'=f+2(\beta_{i_1}+\cdots+\beta_{i_p})$. Then
$f'\in \overline{\Sigma}$ and $q_{f'}<q_{f}$.
\end{lemma}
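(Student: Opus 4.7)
The plan is to establish $f'\in\overline{\Sigma}$ and $q_{f'}<q_f$ in turn, exploiting quasi-symmetry together with Lemma \ref{lem:closure}. For $f'\in\overline{\Sigma}$: quasi-symmetry says $\sum_{\beta_i\in\ell}\beta_i=0$ for every line $\ell\subset X(T)_\RR$ through the origin containing some weight, so each interval $I_\ell=\sum_{\beta_i\in\ell}[-\beta_i,0]$ is centrally symmetric in $\ell$ and $\overline{\Sigma}=\sum_\ell I_\ell$. For lines $\ell\not\perp\lambda_f$, fix a unit vector $e_\ell\in\ell$ with $\langle\lambda_f,e_\ell\rangle>0$ and write $\beta_i=c_ie_\ell$ on $\ell$; Lemma \ref{lem:closure} applied to $f\in\overline{F}_{\lambda_f}$ forces $a_i=-1$ for $c_i>0$ and $a_i=0$ for $c_i<0$, so the $\ell$-component of $f$ is $-S_\ell e_\ell$ with $S_\ell=\sum_{c_i>0}c_i$. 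Each $\beta_{i_j}$ lying on $\ell$ has $c_{i_j}>0$, so the added contribution is $2T_\ell e_\ell$ with $T_\ell=\sum_{\beta_{i_j}\in\ell}c_{i_j}\in[0,S_\ell]$; hence the $\ell$-component of $f'$ is $(-S_\ell+2T_\ell)e_\ell\in I_\ell$. On lines $\ell\perp\lambda_f$ no $\beta_{i_j}$ occurs (since $\langle\lambda_f,\beta_{i_j}\rangle>0$), so the $\ell$-component of $f'$ equals that of $f$ and lies in $I_\ell$. Summing over $\ell$ gives $f'\in\sum_\ell I_\ell=\overline{\Sigma}$.

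For $q_{f'}<q_f$: if $\tau_{f'}=\emptyset$ then $q_{f'}=-\infty<q_f$, so assume $\tau_{f'}\ne\emptyset$ and, for contradiction, $q_{f'}\ge q_f$. Setting $\beta_J=\sum_j\beta_{i_j}$, the hypothesis yields $\langle\lambda_f,\beta_J\rangle>0$, while for any $\lambda\in\overline{\sigma}_{f'}$ the inequality $\langle\lambda,f\rangle\ge u_\lambda=\langle\lambda,f'\rangle$ (from $f\in\overline{\Sigma}$) forces $\langle\lambda,\beta_J\rangle\le 0$. Combined with the identity $\overline{\sigma}_f\cap H_{\beta_J}=\overline{\sigma}_{f'}\cap H_{\beta_J}=\overline{\sigma}_f\cap\overline{\sigma}_{f'}$ (where $H_{\beta_J}=\{\langle\cdot,\beta_J\rangle=0\}$), this gives $\lambda_f\ne\lambda_{f'}$. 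Along the segment $\lambda_t=(1-t)\lambda_f+t\lambda_{f'}$, strict convexity of $\|\cdot\|$ yields $\|\lambda_t\|<1$ for $t\in(0,1)$, while $\langle\lambda_t,\varepsilon\rangle=(1-t)q_f+tq_{f'}\ge q_f>0$; hence $q(\lambda_t)>q_f$ throughout $(0,1)$.

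The decisive step is to exhibit a point $\lambda_*\in\overline{\sigma}_f\cap\overline{H}_\varepsilon\cap\SS$ with $q(\lambda_*)>q_f$, contradicting the uniqueness of the maximum in Corollary \ref{cor:maximum}. If $\lambda_{f'}\in\overline{\sigma}_f$ (equivalently $\langle\lambda_{f'},\beta_J\rangle=0$), convexity of $\overline{\sigma}_f$ gives $\lambda_t\in\overline{\sigma}_f$ for all $t$, and any $\lambda_t/\|\lambda_t\|$ with $t\in(0,1)$ produces the contradiction. Otherwise $\langle\lambda_{f'},\beta_J\rangle<0$ and the segment crosses $H_{\beta_J}$ at some $t^*\in(0,1)$, giving a candidate $\lambda_{t^*}$ with $q(\lambda_{t^*})>q_f$. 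The main obstacle is showing $\lambda_{t^*}\in\overline{\sigma}_{f'}$ (and thus in $\overline{\sigma}_f$ via the identification above): the straight segment can exit $\overline{\sigma}_{f'}$ prematurely when $\lambda_{f'}$ sits on the relative boundary of its cone. A robust route is to replace the linear path from $\lambda_{f'}$ by a piecewise path inside $\overline{\sigma}_{f'}\cap\overline{H}_\varepsilon$, moving toward the sub-face $\overline{\sigma}_{f'}\cap H_{\beta_J}$ through adjacent cones of the normal fan supplied by Proposition \ref{prop:CLS}, and exploiting the linearity of $\langle\cdot,\varepsilon\rangle$ to keep $q\ge q_{f'}\ge q_f$ up to the first entry into $H_{\beta_J}$; this first entry supplies the required $\lambda_*\in\overline{\sigma}_f$.
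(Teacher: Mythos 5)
Your first half (that $f'\in\overline{\Sigma}$) is correct and is essentially the paper's argument: decompose line by line, use quasi-symmetry of each $I_\ell$ and the constraints \eqref{eq:closure1} coming from $f\in\overline{F}_{\lambda_f}$. Note, though, that the paper extracts one extra piece of information from this computation, namely that the new coefficients satisfy $c'_i=c_i$ whenever $\langle\lambda_f,\beta_i\rangle=0$ (your observation that the $\ell$-components for $\ell\perp\lambda_f$ are unchanged); this is exactly the ingredient your second half is missing.

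The second half has a genuine gap, and you have located it yourself: everything hinges on producing a point of $\overline{\tau}_f$ where $q>q_f$, and your construction of that point is not carried out. Worse, the ``robust route'' you propose cannot work as stated. By Corollary \ref{cor:maximum}, $q$ attains its maximum on $\overline{\tau}_{f'}\cap\SS$ at the \emph{unique} point $\lambda_{f'}$, so it is impossible to move inside $\overline{\sigma}_{f'}\cap\overline{H}_\varepsilon$ away from the ray $\RR_{>0}\lambda_{f'}$ ``keeping $q\ge q_{f'}$'': every other point of that cone has $q<q_{f'}$, which under the standing assumption $q_{f'}\ge q_f$ gives you nothing. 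The strict gain $q(\lambda_t)>q_f$ is available only on the straight chord between the two distinct unit vectors (via \eqref{eq:concave}), so you must stay on that chord. In addition, the identity $\overline{\sigma}_f\cap H_{\beta_J}=\overline{\sigma}_{f'}\cap H_{\beta_J}=\overline{\sigma}_f\cap\overline{\sigma}_{f'}$ is asserted without proof and is only used to transport your candidate point from $\overline{\sigma}_{f'}$ to $\overline{\sigma}_f$; it is not needed for $\lambda_f\ne\lambda_{f'}$, which already follows from $\langle\lambda_{f'},\beta_J\rangle\le 0<\langle\lambda_f,\beta_J\rangle$ as you note.

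The fix is to work at the other end of the chord: take $\lambda_t=(1-t)\lambda_f+t\lambda_{f'}$ with $0<t\ll 1$ and check directly, via the criterion \eqref{eq:closure1}, that $f\in\overline{F}_{\lambda_t}$, i.e.\ $\lambda_t\in\overline{\sigma}_f$ (and $\lambda_t\in H_\varepsilon$ since $q(\lambda_t)>q_f>0$), contradicting maximality of $q$ at $\lambda_f$ on $\overline{\tau}_f$. For weights with $\langle\lambda_f,\beta_i\rangle\ne 0$ the sign of $\langle\lambda_t,\beta_i\rangle$ agrees with that of $\langle\lambda_f,\beta_i\rangle$ for small $t$, so \eqref{eq:closure4} applies; for weights with $\langle\lambda_f,\beta_i\rangle=0$ the sign of $\langle\lambda_t,\beta_i\rangle$ is governed by $\langle\lambda_{f'},\beta_i\rangle$, and here one uses $c'_i=c_i$ together with $f'\in\overline{F}_{\lambda_{f'}}$ to verify the required implications. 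This is the step your line-by-line computation already sets up but which you did not exploit.
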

\begin{proof} 
Let $f=\sum_i c_i\beta_i$ with $c_i\in [-1,0]$. Since $f\in \overline{F}_{\lambda_f}$ we have by \eqref{eq:closure1}
\begin{equation}
\label{eq:closure4}
\begin{aligned}
\langle \lambda_f,\beta_i\rangle >0 &\Longrightarrow c_i=-1,\\
\langle \lambda_f,\beta_i\rangle <0 &\Longrightarrow c_i=0.
\end{aligned}
\end{equation}
Or explicitly
\[
f=-\sum_{\langle \lambda_f,\beta_i\rangle>0} \beta_i+\sum_{\langle\lambda_f,\beta_i\rangle=0}
c_i\beta_i.
\]
Then using quasi-symmetry we may write
$
f'=\sum_i c'_i\beta_i
$
with $c'_i\in [-1,0]$, hence $f'\in\overline{\Sigma}$, and moreover 
\begin{equation}
\label{eq:cc}
c'_i=c_i\quad \text{if $\langle \lambda_f,\beta_i\rangle=0$}
\end{equation}
(one may verify this separately for every ray $0\in \ell\subset X(T)_\RR$ and there it is easy).
Assume $q(\lambda_{f'})\ge q(\lambda_f)$
and put $\lambda_t=(1-t)\lambda_f+t\lambda_{f'}$ for $0<t\ll 1$. By \eqref{eq:concave} we have
$q(\lambda_t)>q(\lambda_f)$ as $\lambda_f\neq \lambda_{f'}$. We claim that $\lambda_t\in \overline{\tau}_f$, or equivalently $f\in \overline{F}_{\lambda_t}$,
which is a contradiction with the fact that $q(\lambda_f)$ is the maximum value of $q$ on $\overline{\tau}_f$.
According to \eqref{eq:closure1} we must check
\begin{equation}
\label{eq:closure2}
\begin{aligned}
\langle \lambda_t,\beta_i\rangle>0&\Longrightarrow c_i=-1,\\
\langle \lambda_t,\beta_i\rangle<0&\Longrightarrow c_i=0.
\end{aligned}
\end{equation}
This condition follows from \eqref{eq:closure4} if $\langle \lambda_f,\beta_i\rangle\neq 0$ ($\lambda_t$ is 
close to $\lambda_f$ and hence $\langle \lambda_f,\beta_i\rangle$ and $\langle \lambda_t,\beta_i\rangle$
have the same sign). Therefore we may assume $\langle \lambda_f,\beta_i\rangle=0$. By \eqref{eq:cc}\eqref{eq:closure2} we must
have
\[
\begin{aligned}
\langle \lambda_{f'},\beta_i\rangle>0&\Longrightarrow c'_i=-1,\\
\langle \lambda_{f'},\beta_i\rangle<0&\Longrightarrow c'_i=0,
\end{aligned}
\]
but this follows from \eqref{eq:closure1} and the fact that by definition $f'\in \overline{F}_{\lambda_{f'}}$.
\end{proof}
\begin{remark} \label{rem:easy}
For use below we note that  $f'\in \overline{\Sigma}$ in  Lemma \ref{lem:technical}   would be true  with any $\lambda$ replacing $\lambda_f$ such that
$f\in \overline{F}_\lambda$ and $\forall j:\la \lambda,\beta_{i_j}\ra>0$.
\end{remark}
\subsection{Weyl group action} 
Now we let $\Pi$ be as in \S\ref{sec:someresults} but we assume $E=X(T)_\RR$ and $\Pi$ is $\Wscr$-invariant.
\begin{lemma} \label{lem:weyl}
Let $F$ be a face of $\Pi$ such that $F\cap X(T)^+_\RR\neq \emptyset$. Then $\sigma_F\cap
Y(T)^-_\RR\neq\emptyset$. Moreover
if $\lambda\in \overline{\sigma}_F$
is invariant under the stabilizer of $F$ in $\Wscr$ then $\lambda\in Y(T)^-_{\RR}$.
\end{lemma}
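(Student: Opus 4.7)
The plan is to establish the implicative (second) statement first and then derive the non-emptiness in the first statement by an averaging argument over the stabilizer $\Wscr_F:=\operatorname{Stab}_\Wscr(F)$.

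For the second statement I would unpack $\lambda\in\overline{\sigma}_F$ via \eqref{eq:barsigma} as $F\subseteq \overline{F}_\lambda$. Fix any $f^+\in F\cap X(T)^+_\RR$; then $f^+$ minimizes $\langle\lambda,-\rangle$ on $\Pi$, so for each simple root $\alpha$, using $s_\alpha f^+\in\Pi$ (by $\Wscr$-invariance of $\Pi$),
\[
0\le \langle\lambda,s_\alpha f^+-f^+\rangle=-\langle\alpha^\vee,f^+\rangle\langle\lambda,\alpha\rangle.
\]
Dominance gives $\langle\alpha^\vee,f^+\rangle\ge 0$, and I would then split into cases: if $\langle\alpha^\vee,f^+\rangle>0$ then $\langle\lambda,\alpha\rangle\le 0$ directly; if $\langle\alpha^\vee,f^+\rangle=0$ then $s_\alpha$ fixes $f^+$ and hence lies in $\Wscr_{f^+}\subseteq\Wscr_F$, so the $\Wscr_F$-invariance of $\lambda$ forces $s_\alpha\lambda=\lambda$ and thus $\langle\lambda,\alpha\rangle=0$. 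Either way $\langle\lambda,\alpha\rangle\le 0$ for every simple $\alpha$, yielding $\lambda\in Y(T)^-_\RR$.

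For the first statement I would first observe that running the normal-fan formalism $\Wscr$-equivariantly gives $F_{w\lambda}=wF_\lambda$ (since $\Pi$ is $\Wscr$-invariant), hence $w\sigma_F=\sigma_{wF}$, so $\sigma_F$ is $\Wscr_F$-stable. Picking any $\lambda_0\in\sigma_F$, the average
\[
\lambda:=\frac{1}{|\Wscr_F|}\sum_{w\in\Wscr_F}w\lambda_0
\]
then lies in $\sigma_F$ by convexity and openness, and is $\Wscr_F$-invariant by construction. Applying the second statement to $\lambda\in\sigma_F\subseteq\overline{\sigma}_F$ yields $\lambda\in Y(T)^-_\RR$, witnessing $\sigma_F\cap Y(T)^-_\RR\neq\emptyset$.

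The only mildly delicate point is the inclusion $\Wscr_{f^+}\subseteq\Wscr_F$ used in the second case above. I expect this to follow formally: any $w\in\Wscr_{f^+}$ permutes the faces of $\Pi$ and fixes $f^+$, so $f^+\in\relint(F)\cap\relint(wF)$; since $f^+$ lies in the relative interior of a \emph{unique} face of $\Pi$, this forces $wF=F$.
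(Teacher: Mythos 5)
Your proof is correct, but it inverts the logic of the paper's argument and uses different ingredients. The paper starts from an arbitrary $\lambda\in\sigma_F$, picks $w\in\Wscr$ with $w\lambda\in Y(T)^-_\RR$, and invokes the inequality $\la w\lambda,f\ra\le\la\lambda,f\ra$ for dominant $f$ (citing \cite[Corollary D.3]{SVdB}) to show that $w\lambda$ still supports $F$; this gives the first claim, and the second follows because the $w$ so produced stabilizes $F$ (hence fixes a stabilizer-invariant $\lambda$), after which one passes to closures. You instead prove the second claim first and elementarily: testing minimality of $\la\lambda,-\ra$ at $f^+$ against $s_\alpha f^+\in\Pi$ for each simple root $\alpha$, and using stabilizer-invariance exactly on the walls where $\la\alpha^\vee,f^+\ra=0$; your justification of $\Wscr_{f^+}\subseteq\Wscr_F$ via the fact that the relatively open faces partition $\Pi$ is the right one. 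You then deduce the first claim by averaging over $\Wscr_F$, which is legitimate because $\sigma_F$ is a nonempty relatively open convex cone and the normal fan is $\Wscr$-equivariant ($w\sigma_F=\sigma_{wF}$), so the average stays in $\sigma_F$. Your route avoids the external citation and also sidesteps the mildly delicate closure step in the paper (passing from $\sigma_F^H\subseteq Y(T)^-_\RR$ to $\overline{\sigma}_F^H\subseteq Y(T)^-_\RR$), since your argument for the second claim works directly for any $\lambda\in\overline{\sigma}_F$; the small price is having to verify the equivariance of the fan. Both arguments are complete.
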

\begin{proof} We prove first $\sigma_F\cap
Y(T)^-_\RR\neq\emptyset$. Let $\lambda\in \sigma_F$; i.e. $F=F_\lambda$.
Let $f\in F\cap  X(T)^+$. Then we have for
  all $\nu\in \Pi$
\begin{equation}
\label{eq:ineg1}
\la \lambda,\nu \ra\ge \langle \lambda ,f\rangle,
\end{equation}
with equality if and only if $\nu\in \overline{F}$. 
Let $w\in \Wscr$ be such that $w\lambda\in Y(T)^-_{\RR}$.
By \cite[Corollary D.3]{SVdB} we find
\begin{equation}
\label{eq:ineg2}
\langle w\lambda ,f\rangle
\le \langle \lambda,f\rangle.
\end{equation}
Combining \eqref{eq:ineg1}\eqref{eq:ineg2} we conclude that for all $\nu\in \Pi$ we have
\begin{equation}
\label{eq:ineg3}
\la w\lambda, w\nu\ra=\la \lambda,\nu \ra\ge \langle w\lambda ,f\rangle.
\end{equation}
If this is an equality then in particular \eqref{eq:ineg1} is an
equality so that $\nu\in \overline{F}$. 

Since we clearly have equality in \eqref{eq:ineg3}
for $\nu=w^{-1} f$ we conclude in particular that
$w^{-1}f\in \overline{F}$ which implies that in our current setting
$w^{-1}F=F$. It follows that $F_{w\lambda}=F_\lambda=F$ and hence in
particular $w\lambda\in \sigma_F\cap Y(T)_\RR^-$.

Let $H\subset \Wscr$ be the stabilizer of $F$. Since in the above proof
$w\in H$, we obtain that
if
$\lambda$ is $H$-invariant then $\lambda\in \sigma_F\cap Y(T)_\RR^-$.
In other words
$
\sigma^H_F\subset Y(T)^-_{\RR}
$.
Taking the closure yields  $\overline{\sigma}^H_F\subset Y(T)^-_{\RR}$ which is
the last statement of the lemma.
\end{proof}
\begin{corollary} \label{cor:lambdaf}
If we are given $\varepsilon\in X(T)^\Wscr_\RR$ as in \S\ref{sec:someresults}  and
$f\in X(T)^+_\RR\cap \Pi$, then $\lambda_f\in Y(T)^-_{\RR}$.
\end{corollary}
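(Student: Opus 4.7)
The plan is to reduce the corollary to the last clause of Lemma \ref{lem:weyl} by showing that $\lambda_f$ is invariant under the stabilizer $H \subseteq \Wscr$ of the face $F$ of $\Pi$ containing $f$. Once $H$-invariance is in hand, the membership $\lambda_f \in \overline{\sigma}_F$, which holds by definition of $\lambda_f$ together with \eqref{eq:barsigma}, lets us apply the second assertion of Lemma \ref{lem:weyl} to conclude $\lambda_f \in Y(T)^-_\RR$.

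To establish the $H$-invariance of $\lambda_f$, I would verify that every ingredient used to single out $\lambda_f$ is $H$-equivariant. First, $\sigma_f = \sigma_F$ depends only on $F$, and since $H$ stabilizes $F$ it preserves both $\sigma_F$ and $\overline{\sigma}_F$. Next, the open half-space $H_\varepsilon = \{\lambda \mid \langle\lambda,\varepsilon\rangle > 0\}$ is $\Wscr$-stable because $\varepsilon \in X(T)^\Wscr_\RR$; hence $\tau_f = \sigma_f \cap H_\varepsilon$ and its closure are $H$-stable as well. Finally, the norm $\|\cdot\|$ is $\Wscr$-invariant (as fixed in the statement of Proposition \ref{prop:mainprop}), so the unit sphere $\SS$ is $H$-stable, and the function $q(\lambda) = \langle\lambda,\varepsilon\rangle/\|\lambda\|$ is $H$-invariant because both $\varepsilon$ and $\|\cdot\|$ are.

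Consequently $H$ permutes the set of maximizers of $q$ on $\overline{\tau}_f \cap \SS$. By Corollary \ref{cor:maximum} this set consists of the single point $\lambda_f$, so necessarily $h \cdot \lambda_f = \lambda_f$ for all $h \in H$. The hypothesis of the second clause of Lemma \ref{lem:weyl} is thus met, and the conclusion $\lambda_f \in Y(T)^-_\RR$ follows immediately.

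There is no genuine obstacle in this argument; the only subtlety is to make sure that $\lambda_f$ is defined in the first place, i.e.\ that $\tau_f \neq \emptyset$, which is implicit in the statement of the corollary since the symbol $\lambda_f$ appears. All the remaining work is the routine verification of $H$-equivariance of $\sigma_f$, $H_\varepsilon$, $\SS$, and $q$, after which Corollary \ref{cor:maximum} and Lemma \ref{lem:weyl} do all the remaining work.
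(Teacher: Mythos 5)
Your proof is correct and follows the paper's own argument exactly: the paper likewise deduces the $H$-invariance of $\lambda_f=\lambda_F$ from $\varepsilon\in X(T)^\Wscr_\RR$ (via the uniqueness of the maximizer in Corollary \ref{cor:maximum}, which you spell out in more detail) and then invokes the last clause of Lemma \ref{lem:weyl}. No gaps.
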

\begin{proof} If $F$ is the face of $\Pi$ containing $f$ then 
$\lambda_f=\lambda_F$ is invariant for the stabilizer of $F$ as $\varepsilon\in X(T)^\Wscr_\RR$. It now suffices
to invoke Lemma \ref{lem:weyl}.
\end{proof}
\subsection{Proof of Proposition \ref{prop:mainprop}}
\label{sec:selfcontained}
\begin{enumerate}
\item
We note that the fan $\Sigma_\Pi$ introduced in Proposition \ref{prop:CLS} is invariant
under translation of $\Pi$. Moreover $\lambda_\chi$ and $q_\chi$ are also invariant
under translation. Hence \eqref{it:mainprop1} follows
from Corollary \ref{cor:lambdaf} applied with $f=\chi-\deltaC+\overline{\rho}\in X(T)^+_\RR\cap \Delta_0$.
\item This follows from Lemma \ref{lem:technical} with Remark \ref{rem:easy} and the fact that $\deltaC-\overline{\rho}+\Delta_0$
is invariant under the twisted Weyl group action.
\item 
If $\mu \in X(T)^+_\RR$ then by translation invariance and the fact that $q$ is $\Wscr$-invariant
we obtain $q_{\mu^+}=q_\mu$. Therefore the inequalities $q_\zeta<q_\chi$ follow from Lemma \ref{lem:technical}.
\end{enumerate}

\bibliographystyle{amsalpha}

\end{document}